\newtheorem{thm}{Theorem}
\newtheorem{prop}{Proposition}
\newtheorem{lem}{Lemma}
\numberwithin{thm}{section}
\numberwithin{prop}{section}
\numberwithin{lem}{section}
\numberwithin{equation}{section}
\title{EFFECTIVE RATIONAL APPROXIMATION ON SPHERES}
\date{}
\author{Zouhair Ouaggag}
\date{}
\begin{document}

\begin{abstract}
We prove an effective estimate for the counting function of Diophantine approximants on the sphere S$^n$. We use homogeneous dynamics on the space of orthogonal lattices, in particular effective equidistribution results and non-divergence estimates for the Siegel transform, developping on recent results of Alam-Ghosh and Kleinbock-Merrill.
\end{abstract}

\maketitle
\tableofcontents
\newpage

\section{Introduction and main result}

\paragraph{\textbf{Intrinsic Diophantine Approximation}}It is well-known in metric Diophantine approximation that for any $c>0$ and Lebesgue-almost all $\alpha \in \mathbb{R}^{m}$, there exist infinitely many solutions $(\textbf{p},q) \in \mathbb{Z}^{m}\times \mathbb{N}$ to the inequality\footnote{$|| \cdot||$ will denote the Euclidean norm.}
\begin{equation}
\label{diophantine}
 \left\lVert \alpha - \frac{\textbf{p}}{q}\right\rVert~ < ~  \frac{c}{q^{1+\frac{1}{m}}} \quad .
\end{equation}

A first refinement of this problem is to count solutions up to a certain bound for the complexity $q$ of the approximants, which leads to consider counting functions such as
$$
\mathsf{N}_{T,c}(\alpha) \coloneqq  \lvert \lbrace (\textbf{p},q) \in \mathbb{Z}^{m}\times \mathbb{N} : 1 \leq q < e^T \text{ and } (\ref{diophantine}) \text{ holds } \rbrace  \rvert~.
$$

Accurate estimates of the counting function $\mathsf{N}_{T,c}$ have already been established. We mention in particular the effective estimate by W. Schmidt \cite{Schmidt1960AMT}, who proved for more general approximating functions that for Lebesgue-almost $\alpha \in [0,1]^m$, 
\begin{equation}
\label{schmidt effective estimate}
\mathsf{N}_{T,c}(\alpha)=C_{c,m}\,T + O_{\alpha,\varepsilon}\left(  T^{1/2+\varepsilon}\right)~,
\end{equation}
for all $\varepsilon>0$, with a constant $C_{c,m}$ depending only on $c$ and $m$.\\

Another refinement of this problem is the so-called \emph{intrinsic} Diophantine approximation, where one considers a vector $\alpha$ in a manifold $X\subset \mathbb{R}^{m}$, for example level sets of a quadratic form, and is interested in counting rational approximants which also belong to $X$.
For rational approximation on spheres, Kleinbock and Merrill \cite{kleinbock2013rational} proved an analog of Dirichlet's theorem and a Kintchine-type dichotomy. For the divergence case in this last result, Alam and Ghosh \cite{alam2020quantitative} proved a quantitative estimate for the number of rational approximants on spheres with the critical Dirichlet exponent (Theorem \ref{quantitative approximation alam}). Our main result in this paper is to prove an estimate with error term analogous to (\ref{schmidt effective estimate}) for intrinsic Diophantine approximation on the sphere S$^n$.\\

Given $T, c>0$ and $\alpha \in$ S$^{n}$, we consider the following inequality (with the critical Dirichlet exponent for intrinsic Diophantine approximation on S$^{n}$)
\begin{equation}
\label{intrinsic diophantine}
 \left\lVert \alpha - \frac{\textbf{p}}{q}\right\rVert~ < ~  \frac{c}{q} \quad ,
\end{equation}
and the following counting function for intrinsic rational approximants
$$
N_{T,c}(\alpha) \coloneqq  \lvert \lbrace (\textbf{p},q) \in \mathbb{Z}^{n+1}\times \mathbb{N} : \frac{\textbf{p}}{q} \in \text{S}^n,~1 \leq q < \cosh T \text{ and } (\ref{intrinsic diophantine}) \text{ holds } \rbrace  \rvert~.
$$

We recall the following quantitative result of Alam and Ghosh.

\begin{thm}[\cite{alam2020quantitative}]
\label{quantitative approximation alam}
There exists a computable constant $C_{c,n}>0$, depending only on $c$ and $n$, such that for almost every $\alpha \in \emph{S}^{n}$,	
\begin{equation}
\label{quantitative approximation}
\frac{N_{T,c}(\alpha)}{T} \sim C_{c,n} \quad \text{as} ~ T \rightarrow \infty~.
\end{equation}
\end{thm}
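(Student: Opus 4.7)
The plan is to translate the counting of intrinsic rational approximants into a lattice-point problem on the space of orthogonal lattices, and then to apply a pointwise ergodic argument. The starting correspondence is classical: a point $\mathbf{p}/q \in \mathrm{S}^n$ with $\mathbf{p} \in \mathbb{Z}^{n+1}$, $q \in \mathbb{N}$, is the same data as a primitive integer vector $(q,\mathbf{p})$ on the null cone of the quadratic form $Q(x_0,\ldots,x_{n+1}) = x_0^2 - x_1^2 - \cdots - x_{n+1}^2$. Setting $G = \mathrm{SO}(Q)^\circ \simeq \mathrm{SO}(n+1,1)^\circ$ and $\Gamma = G \cap \mathrm{SL}_{n+2}(\mathbb{Z})$, the quotient $G/\Gamma$ is a finite-volume homogeneous space of orthogonal lattices via $g\Gamma \mapsto g\mathbb{Z}^{n+2}$, and counting primitive null vectors of prescribed size in a translated lattice is the natural dynamical avatar of the problem.

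Next, I would fix null vectors $e_\pm$ with $Q(e_+,e_-)=1$, take $A = \{a_t\}$ to be the diagonal one-parameter subgroup dilating $e_\pm$ by $e^{\pm t}$, and let $U \subset G$ be the expanding horospherical subgroup for $\{a_t\}$. A dense open piece of $\mathrm{S}^n$ is parametrized by $U$ via $\alpha \mapsto u_\alpha$, and a direct matrix computation shows that the inequality (\ref{intrinsic diophantine}) with $1 \leq q < \cosh T$ is equivalent to the vector $a_T u_\alpha^{-1}(q,\mathbf{p}) \in \mathbb{R}^{n+2}$ lying in a fixed bounded region $B_c$ independent of $T$. Writing $\widehat{\chi_{B_c}}$ for the Siegel-type transform that sums the indicator of $B_c$ over primitive null lattice vectors of $g\mathbb{Z}^{n+2}$, we then obtain, up to a bounded additive error,
\[
N_{T,c}(\alpha) \;=\; \int_0^T \widehat{\chi_{B_c}}(a_t u_\alpha \Gamma)\, dt \;+\; O(1).
\]

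With this formula in hand, the target asymptotic amounts to the a.e.\ convergence of the Birkhoff averages of $\widehat{\chi_{B_c}}$ along the $A$-orbit through $u_\alpha \Gamma$. The space average
\[
C_{c,n} \;\coloneqq\; \frac{1}{m(G/\Gamma)} \int_{G/\Gamma} \widehat{\chi_{B_c}}\, dm
\]
is finite and explicitly computable via a generalized Siegel integral formula on the null cone of $Q$, and ergodicity of the $A$-action on $(G/\Gamma, m)$ combined with Birkhoff's theorem gives convergence for $m$-a.e.\ basepoint in $G/\Gamma$.

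The main obstacle is to transfer $m$-a.e.\ convergence on $G/\Gamma$ into Lebesgue-a.e.\ convergence on $\mathrm{S}^n$, since the horospherical slice $\{u_\alpha \Gamma : \alpha \in \mathrm{S}^n\}$ is itself a null set in $G/\Gamma$. I would handle this in two steps. First, invoke equidistribution of the expanding horospherical translates $a_T U \Gamma$ in $G/\Gamma$, in the style of Kleinbock--Margulis, to show that the $L^2$-norm of the horospherical mean of $\widehat{\chi_{B_c}}$ around its space average decays polynomially in $T$; this requires a non-divergence / integrability estimate for $\widehat{\chi_{B_c}}$, which is unbounded on $G/\Gamma$ because of short null lattice vectors escaping into the cusps. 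Second, apply a Borel--Cantelli argument along a geometric subsequence $T_k = \eta k$, together with the monotonicity of $N_{T,c}(\alpha)$ in $T$, to promote the $L^2$ control to pointwise a.e.\ convergence in $\alpha$, yielding the asymptotic (\ref{quantitative approximation}).
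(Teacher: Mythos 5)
This theorem is cited from Alam--Ghosh and not proved in the present paper, so there is no internal proof to compare against; I will assess your proposal against the cited argument and against the machinery built later in the paper. The overall shape is right: the correspondence sending $\mathbf{p}/q \in \mathrm{S}^n$ to a primitive null integer vector, the Siegel transform on the space of lattices in the cone, Siegel's mean value identity to produce $C_{c,n}$, and the reduction to an ergodic statement for $\{a_t\}$ all match the strategy of \cite{alam2020quantitative} and of Section~\ref{correspondance} here. Two points deserve scrutiny, however.

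First, your intermediate formula is internally inconsistent as written: you assert that the Diophantine inequality with $1\leq q<\cosh T$ is equivalent to $a_T u_\alpha^{-1}(q,\mathbf{p})$ landing in a \emph{fixed} region $B_c$ (an event indexed by a single time $T$), but then you write $N_{T,c}(\alpha)=\int_0^T\widehat{\chi_{B_c}}(a_t u_\alpha\Gamma)\,dt+O(1)$, which sums contributions over a whole time interval. These two cannot both be literally true for the same $B_c$. What one actually needs (and what the paper does in (\ref{tessaltion of F_T,c})--(\ref{sandwiching F_T,c})) is a fundamental slab $F_{1,c}$ whose $\{a_{-j}\}$-translates tessellate the approximating region $F_{T,c}$, so that the count becomes a \emph{discrete} Birkhoff sum $\sum_{t=0}^{\lfloor T\rfloor}\hat\chi(a_t k\Lambda_0)$. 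Your integral version can be made to work by choosing $B_c$ to be a slab swept out over unit time, but you should say so; the version you wrote is not self-consistent.

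Second, and more substantively, you correctly identify the real obstacle -- the sphere sits as a \emph{measure-zero} $K$- (or $U$-) slice inside $G/\Gamma$, so Birkhoff's theorem for the Haar measure says nothing directly about a.e.\ $\alpha\in\mathrm{S}^n$. But the remedy you propose (equidistribution of expanding translates in the style of Kleinbock--Margulis, an $L^2$ variance bound, Borel--Cantelli along a geometric sequence, plus monotonicity) is the machinery this paper develops to prove the stronger \emph{effective} Theorem~\ref{main theorem}, not the route Alam--Ghosh take for the soft asymptotic. For the non-effective statement one can pass from Haar-a.e.\ to $\mu_K$-a.e.\ by a Fubini-type argument using the local product coordinates $K\times A\times U^-$ and the contraction of $U^-$ under $a_t$, without any rate. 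Your route is not wrong, but if you insist on it you cannot gloss over the regularity issues you mention in passing: $\hat\chi$ is unbounded (it fails even to lie in $L^2$ when $n=2$, since $\alpha$ is only $L^p$ for $p<n$, cf.\ Proposition~\ref{alpha_Lp}) and discontinuous, so the equidistribution theorem, which requires $C^l$ test functions, does not apply to it directly. You must first truncate (Proposition~\ref{bounds truncated siegel transform}) and smooth (Proposition~\ref{smooth approximation}) in a way compatible with the translated $K$-orbit measures, and these steps are exactly where the real work lies. As a sketch of the non-effective theorem your proposal overshoots and leaves the hard part underspecified; as a sketch of Theorem~\ref{main theorem} it is closer to the mark but still missing the truncation/smoothing estimates that make the equidistribution input usable.
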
		
 
In the following theorem we give an estimate with error term improving (\ref{quantitative approximation}).
	
\begin{thm} 
\label{main theorem}
Let $n\geq 2$. There exist a computable constant $C_{c,n}>0$, depending only on $c$ and $n$, and a constant $\gamma <1$ depending only on $n$, such that for almost every $\alpha \in \emph{S}^{n}$, 
\begin{equation}
\label{effective approximation}
N_{T,c}(\alpha) = C_{c,n}T + O_{\alpha}(T^{\gamma})~.
\end{equation}
\end{thm}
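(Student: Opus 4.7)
The plan is to translate $N_{T,c}(\alpha)$ into an integral of a Siegel transform along a diagonal orbit on the space of lattices orthogonal with respect to $Q=X_0^2-X_1^2-\cdots-X_{n+1}^2$, and then combine effective equidistribution, non-divergence of the Siegel transform, and a Borel--Cantelli-type argument to upgrade the asymptotic in Theorem \ref{quantitative approximation alam} to an effective bound.

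Following the dictionary of Kleinbock--Merrill and of Alam--Ghosh, I would first identify rational points on $S^n$ satisfying (\ref{intrinsic diophantine}) with lattice vectors in $u_\alpha \Lambda_0$ lying in a bounded region $\mathcal{B}_c \subset \mathbb{R}^{n+2}$, after applying an expanding diagonal flow $a_t$ inside $G=\mathrm{SO}(Q,\mathbb{R})$; here $\Lambda_0$ is a fixed reference orthogonal lattice and $u_\alpha$ sits in an unstable horospherical subgroup parametrising $S^n$. The bound $q<\cosh T$ is exactly what the $a_t$-expansion produces after integration from $0$ to $T$. Writing $\Gamma$ for the stabiliser of $\Lambda_0$, $X=G/\Gamma$, and $\widehat{\chi}_{\mathcal{B}_c}$ for the Siegel transform of the indicator of $\mathcal{B}_c$, this gives
\[
N_{T,c}(\alpha) \;=\; \int_0^T \widehat{\chi}_{\mathcal{B}_c}\!\bigl(a_t u_\alpha \Gamma\bigr)\, dt \;+\; O(1).
\]

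The analytic core is effective equidistribution of the orbits $\{a_t u_\alpha \Gamma\}$ on $X$, derived from exponential mixing of $a_t$ against the invariant probability measure $\mu_X$; such mixing is available for $n\ge 2$ via the spectral gap of $G$ on $L^2(X)$. A variance argument along the horospherical direction parametrising $\alpha$ then gives, for a smooth compactly supported test function $f$,
\[
\int_{S^n}\!\Bigl(\int_0^T f(a_t u_\alpha \Gamma)\,dt - T\!\int_X f\,d\mu_X\Bigr)^{\!2} d\sigma(\alpha) \;=\; O\bigl(\lVert f\rVert_{C^k}^{2}\, T^{2-\kappa_0}\bigr)
\]
for some $\kappa_0>0$ and $k$, where $\sigma$ denotes the round measure on $S^n$. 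Since $\widehat{\chi}_{\mathcal{B}_c}$ is neither smooth nor bounded, I would approximate it by convolution-smoothing at scale $\eta$ and truncation above cusp height $R$, and control the resulting errors via Lipschitz regularity of $\widehat{\chi}_{\mathcal{B}_c}$ on the bulk, $L^p$ moment bounds $\int_X \widehat{\chi}_{\mathcal{B}_c}^{\,p}\,d\mu_X<\infty$, and Eskin--Kleinbock--Margulis type non-divergence estimates for the measure of orbits spending long time above height $R$.

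Optimising $\eta$ and $R$ as functions of $T$ should produce, in a mean-square sense, $N_{T,c}(\alpha)=C_{c,n}T + E_T(\alpha)$ with $\lVert E_T\rVert_{L^2(S^n)} = O(T^{\gamma_0})$ for some $\gamma_0<1$ depending only on $n$. A standard Chebyshev plus Borel--Cantelli argument along a sparse sequence $T_j\to\infty$, combined with monotonicity of $N_{T,c}$ in $T$ to interpolate on $[T_j,T_{j+1}]$, then upgrades this to the pointwise bound (\ref{effective approximation}) for a.e.\ $\alpha\in S^n$, with a $\gamma<1$ slightly larger than $\gamma_0$. The main obstacle is the analytic step: balancing Sobolev regularity of the smoothed cutoff against the mixing rate $\kappa_0$ and against the measure of orbits excursing deep into the cusp before time $T$. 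These three errors compete, and their optimal balance dictates both the attainable exponent $\gamma$ and the essential use of the spectral gap that forces $n\ge 2$.
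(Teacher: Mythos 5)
Your outline tracks the paper's strategy quite closely: the Kleinbock--Merrill/Alam--Ghosh dictionary, reduction to a Siegel transform on the space of orthogonal lattices, smoothing and cusp truncation, non-divergence estimates, effective (second-moment) equidistribution of pushed orbits, and a Borel--Cantelli upgrade. But there are a few points where your route diverges from the paper's, and one of them needs repair before the argument closes. First, the paper parametrises $S^n$ by the maximal compact $K=\mathrm{SO}(n+1)$ and exploits an exact tessellation $F_{N,c}=\bigsqcup_{j=0}^{N-1}a_{-j}F_{1,c}$, reducing $N_{T,c}(\alpha)$ to a \emph{discrete} Birkhoff sum $\sum_{t=0}^{\lfloor T\rfloor}\hat\chi_{1,c}(a_tk\Lambda_0)$; your continuous identity $N_{T,c}(\alpha)=\int_0^T\hat\chi_{\mathcal B_c}(a_tu_\alpha\Gamma)\,dt+O(1)$ is not exact for a generic choice of $\mathcal B_c$ (the dwell time of a lattice vector in $\mathcal B_c$ is not identically $1$), and the horospherical parametrisation also introduces a Jacobian between Haar measure on $U$ and the round measure $\sigma$ on $S^n$ that you do not account for. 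The paper's effective equidistribution for $K$-orbits (Proposition \ref{double equidistribution K orbits}) is in fact derived by locally reducing to horospherical orbits, so the underlying input is the same, but working with $K$ directly dispenses with the Jacobian and endpoint bookkeeping. Second, and more substantively: your plan calls for a mean-square bound $\|E_T\|_{L^2(S^n)}=O(T^{\gamma_0})$, but the Siegel transform on the pushed $K$-orbits is only uniformly in $L^s$ for $s<2$ (Proposition \ref{bounds siegel transform}), and likewise the truncation error $\hat\chi-\hat\chi^{(L)}$ is controlled only in $L^p_{\mathcal Y}$ for $p<2$ (Proposition \ref{bounds truncated siegel transform}). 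Thus an $L^2$ bound is not available for the actual counting function; the paper therefore proves an $L^p$ bound for some $1<p<2$ and feeds it into a dyadic-decomposition version of Borel--Cantelli (Proposition \ref{pointwise equidistribution}, following Kleinbock--Shi--Weiss), rather than the classical sparse-sequence Chebyshev argument you propose. Your sparse-sequence plus monotonicity interpolation would still work once you replace $L^2$ by $L^p$, $p<2$, but the exponent you get is weaker, and you should verify that the constraints $t\gtrsim\log L$ and $t\gtrsim\log(1/\eta)$ from non-divergence and smoothing are compatible with whatever blocks you sum over; this is precisely why the paper uses a modified dyadic decomposition rather than simple dyadic blocks. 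Finally, the role of $n\geq 2$ is not primarily the spectral gap (which holds for all $n\geq 1$), but the $L^p$-integrability of $\alpha$ in Propositions \ref{alpha_Lp} and \ref{bounds siegel transform}: the truncation error in $L^1_{\mathcal X}$ decays like $L^{-(\tau-1)}$ for $\tau<n$, which is useless when $n=1$.
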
	

\paragraph{\textbf{Outline of the paper}}

We develop further the approach of \cite{kleinbock2013rational} and \cite{alam2020quantitative}, starting with the embedding of S$^n$ in the positive light cone $\mathcal{C}\coloneqq \{ x \in \mathbb{R}^{n+1}\times \mathbb{R}_+ :Q(x)=0\}$ for a quadratic form $Q$ of inertia $(n+1,1)$, and identifying good approximants $\frac{\textbf{p}}{q} \in$ S$^n$ with integer points $(\textbf{p},q)$ in $\mathbb{Z}^{n+2}\cap \mathcal{C}$ whose images under certain rotations $k \in K=\text{SO}(n+1)$ lie in a certain domain $E_{T,c} \subset \mathcal{C}$ (we recall more details about this correspondance in Section \ref{correspondance}). The number of solutions $N_{T,c}$ is then related to the number of lattice points in the domain $E_{T,c}$, which can be appoximated by a more convenient domain $F_{T,c}$ and tessellated by the action of a hyperbolic subgroup $\{a_t, t\in \mathbb{R} \} \subset $ SO$(Q)$. Estimating $N_{T,c}$ amounts then to analyzing ergodic averages of a counting function $\hat{\chi}$, namely the Siegel transform of the characterstic function of a fixed domain $F_{1,c}$, along $K$-orbits pushed by $\{a_t\}$.\vspace{7pt}\\
\indent In \cite{alam2020quantitative}, the authors use ergodicity of the $\{a_t\}$-action and Birkoff's Ergodic Theorem to obtain an asymptotic estimate for these ergodic averages. In order to obtain an estimate with error term, we use effective pointwise equidistribution along $\{a_t\}$-orbits, which we derive from effective double equidistribution of $K$-orbits pushed by $\{a_t\}$ (Proposition \ref{double equidistribution K orbits}), using a general method presented in \cite{kleinbock2017pointwise} to derive an "almost everywhere"-bound from an L$^p$-bound on ergodic averages (Proposition \ref{pointwise equidistribution}). However, using effective equidistribution requires to consider smooth and compactly supported test functions, whereas our counting function has typically none of these regularities. We address this issue in two steps.\vspace{7pt}\\ 
\indent We first introduce and study the integrability of the Siegel transform on the space of orthogonal lattices, then show (Proposition \ref{bounds truncated siegel transform}) that the Siegel transform $\hat{f}$ of a compactly supported function $f$ can be approximated by a truncated Siegel transform $\hat{f}^{(L)}$ in a way to control the approximation on translated $K$-orbits, i.e. control $| \hat{f}\circ a_t - \hat{f}^{(L)}\circ a_t|$ with respect to the probability measure on the orbits. In a second step (Proposition \ref{smooth approximation}), we show that the characteristic function $\chi$ of the elementary domain $F_{1,c}$ can be approximated by a family of smooth compactly supported functions $f_{\varepsilon}$, again in a way to keep control of the approximated Siegel transform $\hat{f}_{\varepsilon}$ over translated $K$-orbits. In this process we also need non-divergence results for the Siegel transform with respect to the probability measure on $K$-orbits (Propositions \ref{alpha_Lp} and \ref{non-espace of mass}). Thus, we can relate the counting function $ \hat{\chi}$ to a smooth and compactly supported function on the space of orthogonal lattices and then establish an effective almost-everywhere estimate for its ergodic averages (Theorem \ref{pointwise equidistribution counting function}). We finally derive an effective estimate for the number of solutions $N_{T,c}$ (Section \ref{proof of main theorem}).

\section{Diophantine approximation on S$^n$ and dynamics on the space of lattices}
\label{correspondance}

We recall the correspondence presented in \cite{kleinbock2013rational} and \cite{alam2020quantitative} between Diophantine approximation on the sphere S$^{n}$ and the dynamics of orthogonal lattices in $\mathbb{R}^{n+2} $.\\

We consider the quadratic form $Q: \mathbb{R}^{n+2} \rightarrow \mathbb{R}$ defined by
\begin{equation}
\label{quadratic form}
Q(x) := \sum_{i=1}^{n+1} x_i^{2} - x_{n+2}^{2}, \quad \text{ for } x= (x_1, \dots , x_{n+2})~,
\end{equation}
and the embedding of S$^{n}$ in the positive light cone
$$
\mathcal{C} := \{  x \in \mathbb{R}^{n+2} : Q(x)=0,~ x_{n+2} > 0 \}~,
$$
via $\alpha \mapsto (\alpha,1)$, which yields a one-to-one correspondance between primitive integer points on the positive light cone, $(\textbf{p},q) \in  \mathcal{C} \cap \mathbb{Z}^{n+2}_{\text{prim}}$, and rational points on the sphere, $ \frac{\textbf{p}}{q} \in S^{n}$.\\

We denote by $G$=SO$(Q)^\circ \cong$ SO$(n+1,1)^\circ$ the connected component of the group  of orientation-preserving linear transformations which preserve $Q$. We denote by $\Lambda_0 := \mathcal{C} \cap \mathbb{Z}^{n+2}$ the set of integer points on the positive light cone. By a \emph{lattice $\Lambda$ in $\mathcal{C}$} we mean a set of the form $g\Lambda_0$ for some $g\in G$. 
If we denote by $\Gamma$ the stabilizer of $\Lambda_0$ in $G$, then $\Gamma$ is a lattice in $G$ containing the subgroup SO$(Q)_{\mathbb{Z}}^\circ$ of integer points in $G$, as a finite index subgroup.
The space of lattices in $\mathcal{C}$ can be identified with the homogeneous space $\mathcal{X} := G/\Gamma$, endowed with the $G$-invariant probability measure $\mu_{\mathcal{X}}$.\\

Let $K$ denote the subgroup of $G$ that preserves the last coordinate in $\mathbb{R}^{n+2} $, i.e.
$$
K= \begin{pmatrix}
\text{SO}(n+1) &  \\
 & 1
\end{pmatrix} \cong \text{SO}(n+1)~,
$$
equipped with the Haar probability measure $\mu_K$.\\

The sphere S$^n$ can be realized as a quotient of $K$, endowed with a unique left $K$-invariant probability measure, giving a natural correspondance between full-measure sets in $K$ and those in S$^n$. \\

Let $\alpha\in$ S$^n$. For $k \in K$ such that $k(\alpha,1) = (0,\dots,0,1,1) \in \mathcal{C}$, and $(\textbf{p},q) \in \Lambda_0$, we write $k(\textbf{p},q)= (x_1,x_2, \dots , x_{n+2}) \in \mathcal{C}$, with $x_{n+2}=q$, and observe the following correspondance (\cite{alam2020quantitative}, Lemma 2.2.):
\begin{align*}
\left\lVert \alpha - \frac{\textbf{p}}{q}\right\rVert < \frac{c}{q} \quad &\Leftrightarrow \quad \lVert q( \alpha,1) -(\textbf{p},q) \rVert < c, \\
&\Leftrightarrow \quad \lVert qk(\alpha,1) - k(\textbf{p},q)\rVert < c, \\
&\Leftrightarrow \quad \lVert(x_1,x_2, \dots , x_n, x_{n+1}-x_{n+2},0)\rVert < c, \\
&\Leftrightarrow \quad 2x_{n+2}(x_{n+2}-x_{n+1}) < c^{2} \quad (\text{since }k(\textbf{p},q) \in \mathcal{C}). 
\end{align*}
Hence, if we denote 
$$E_{T,c}\coloneqq  \{ x \in \mathcal{C} : 2x_{n+2}(x_{n+2}-x_{n+1}) <c^{2}, 1\leq x_{n+2} < \cosh T \},
$$
then, for $k \in K$ such that $k(\alpha,1)=(0,\dots,0,1,1) \in \mathcal{C}$, we have:
\begin{equation}
\label{N_T,c as volume of E_T,c}
N_{T,c}(\alpha) =\lvert E_{T,c} \cap k\Lambda_0 \lvert.
\end{equation}

We denote $
\mathcal{Y} \coloneqq K \Lambda_0$, equipped with the Haar probability measure $\mu_{\mathcal{Y}}$.\\

We also consider elements 
$$
a_t =  \begin{pmatrix}
I_n & & \\
 &  \cosh t & -\sinh t \\
 & -\sinh t & \cosh t \end{pmatrix} \quad \in G
$$
and the corresponding one-parameter subgroup
$$
A = \left\lbrace  a_t : t \in \mathbb{R}\right\rbrace
$$
endowed with the natural measure $dt$.\\

We will denote by $\chi_E$ the characteristic function of a given set $E$ and use the notation $a \asymp b$ (resp. $a \ll b$) when there exist positive constants $C_1$ and $C_2$ such that $C_1b \leq a \leq C_2 b$ (resp. $a \leq C_2 b$).\\

In order to use the dynamics of translates of $\mathcal{Y}$ for the Diophantine approximation problem (\ref{N_T,c as volume of E_T,c}), we first approximate $E_{T,c}$ by a domain offering a convenient tessellation under the action of the subgroup $A$. We recall briefly the approach as in \cite{alam2020quantitative}.\\

\paragraph{\textbf{Appoximation of $E_{T,c}$}}
We approximate $E_{T,c}$ by the domain $F_{T,c}$ defined by 
$$
F_{T,c} := \{ x \in \mathcal{C} : x_{n+2}^{2}- x_{n+1}^{2} < c^{2}, 1\leq x_{n+2}+ x_{n+1} < e^{T} \}.
$$

We have indeed the following ''sandwiching'' of $E_{T,c}$ (see §3 in \cite{alam2020quantitative}), 
\begin{equation}
\label{sandwiching}
F_{T-r_0,c_\mathcal{l}} \setminus C_{\mathcal{l}} \quad \subseteq \quad
E_{T,c} \setminus C_0 \quad \subseteq \quad F_{T+r_0,c}~,
\end{equation} 
for large enough integers $\mathcal{l}$, with the sequence $c_\mathcal{l}\coloneqq c\cdot \left( 1-\frac{c^2}{2\mathcal{l}}\right)^{1/2}$, $c_\mathcal{l} \uparrow c$, a fixed constant $r_0>0$, and $K$-invariant sets 
$$C_0 \coloneqq \{ x \in \mathcal{C}: x_{n+2} \leq c^2+1\}, \text{ and } C_\mathcal{l} \coloneqq \{ x \in \mathcal{C}: x_{n+2} \leq \mathcal{l} \}~.$$ 

Then (\ref{N_T,c as volume of E_T,c}) and (\ref{sandwiching}) imply, for all $T>0$ and all $k \in K$ as in (\ref{N_T,c as volume of E_T,c}),
\begin{equation}
\label{sandwiching N_T,c}
|(F_{T-r_0,c_\mathcal{l}} \setminus C_{\mathcal{l}})\cap k\Lambda_0| ~\leq ~N_{T,c}(\alpha) + O(1) ~\leq~ |F_{T+r_0,c} \cap k\Lambda_0|. \\
\end{equation}

We observe further that 
\begin{equation*}
F_{1,c}\setminus F_{1,c_\mathcal{l}}= \{ x \in \mathcal{C} : c_\mathcal{l} \leq (x_{1}^{2}+ \dots + x_{n}^{2})^{1/2} < c, 1< x_{n+2}+ x_{n+1} < e \},
\end{equation*}
hence, as $l\rightarrow \infty$,
\begin{equation}
\label{volum F_1,c approximation}
\text{vol}(F_{1,c}) = \text{vol}( F_{1,c_\mathcal{l}})+ O\left( c-c_\mathcal{l} \right)=\text{vol}( F_{1,c_\mathcal{l}})+ O\left(\mathcal{l}^{-1} \right).
\end{equation}
\vspace{5pt}
\paragraph{\textbf{Tessellation of $F_{T,c}$}} We observe further that the domain $F_{T,c}$ can be tessellated using translates of the set $F_{1,c}$ under the action of $\{a_t\}$. We have, for all $N \geq 1$,
\begin{equation}
\label{tessaltion of F_T,c}
F_{N,c}= \bigsqcup_{j=0}^{N-1} a_{-j}(F_{1,c}).
\end{equation}

We denote by $\chi_{1,c}$ the characteristic function of $F_{1,c}$, and $\hat{\chi}_{1,c}$ its Siegel transform defined by
$$\hat{\chi}_{1,c}(\Lambda) \coloneqq \sum_{z \in \Lambda \setminus \{0\}} \chi_{1,c} (z),~ \text{ for all } \Lambda \in \mathcal{X}.$$

The tessellation (\ref{tessaltion of F_T,c}) implies, for all $T>0$ and all $\Lambda \in \mathcal{X}$,
\begin{equation}
\label{sandwiching F_T,c}
\sum_{t=0}^{\left\lfloor T \right\rfloor-1} \hat{\chi}_{1,c}(a_t\Lambda) ~\leq~ |F_{T,c} \cap \Lambda| ~\leq~ \sum_{t=0}^{\left\lfloor T \right\rfloor} \hat{\chi}_{1,c}(a_t\Lambda).
\end{equation}

We estimate the number of lattice points in $C_\mathcal{l}$ by $O\left( \mathcal{l}^{n} \right)$. It follows from (\ref{sandwiching N_T,c}) and (\ref{sandwiching F_T,c}), for all $T>0$ and $k \in K$ as in (\ref{N_T,c as volume of E_T,c}),
\begin{align}
\label{resandwiching N_T,c}
\sum_{t=0}^{\left\lfloor T-r_0 \right\rfloor-1} \hat{\chi}_{1,c_\mathcal{l}}(a_t k\Lambda_0) +O\left( \mathcal{l}^{n}\right) ~\leq ~N_{T,c}(\alpha) +O(1) ~\leq~ \sum_{t=0}^{\left\lfloor T+r_0 \right\rfloor} \hat{\chi}_{1,c}(a_t k\Lambda_0).
\end{align}

Thus, estimating $N_{T,c}(\alpha)$ amounts to analyzing ergodic sums of the form $\sum_{t=0}^{N} \hat{\chi}_{1,c}\circ a_t$ on $\mathcal{Y}= K\Lambda_0$. We will use for this purpose effective equidistribution results for unimodular lattices and specialize them to $\mathcal{Y}$. It will be important in our argument later that the error term in the effective equidistribution is explicit in terms of the $C^l$-norm, for some $l\geq 1$, of the test functions on $\mathcal{X}$. We introduce below the required notations.\\

Every $Y \in$ Lie$(G)$ defines a first order differential operator $D_Y$ on $C_c^{\infty}(\mathcal{X})$ by
$$
D_Y(\phi)(x) \coloneqq \frac{d}{dt}\phi(\exp (tY)x)|_{t=0}.
$$

If $\{ Y_1,\dots, Y_r\}$ is a basis of Lie$(G)$, then every monomial $Z=Y_1^{l_1}\dots Y_f^{l_r}$ defines a differential operator by 
\begin{equation}
\label{diff operator}
D_Z\coloneqq D_{Y_1}^{l_1}\dots D_{Y_r}^{l_r},
\end{equation}
of degree deg$(Z)=l_1+\dots+l_r$. For $l\geq 1$ and $\phi \in C_c^{\infty}(\mathcal{X})$, we write
$$
||\phi||_l \coloneqq ||\phi||_{C^l}= \sum_{\text{deg}(Z)\leq l} ||D_Z(\phi)||_{\infty}
$$

We recall in the following section some properties of the Siegel transform that we use later to analyze the ergodic sums $\sum_{t=0}^{N} \hat{\chi}_{1,c}\circ a_t$.

\section{Approximation of the counting function}

\subsection{Siegel transform}
\label{siegel transform}

Given a bounded measurable function $f:\mathbb{R}^{n+2} \rightarrow \mathbb{R}$ with compact support, its Siegel transform on the space $\mathcal{L}$ of unimodular lattices in $\mathbb{R}^{n+2}$ is defined by
\begin{equation}
\label{def siegel transform}
\hat{f}(\Lambda) \coloneqq \sum_{z \in \Lambda \setminus \{0\}} f(z), \quad \text{for } \Lambda \in \mathcal{L}.
\end{equation}

The Siegel transform of a bounded function is typically unbounded, but its growth rate is controlled by an explicit function $\alpha$ defined as follows.

Given a lattice $\Lambda \in \mathcal{L}$, we say that a subspace $V$ of $\mathbb{R}^{n+2}$ is $\Lambda$-\emph{rational} if the intersection $ V\cap\Lambda$ is a lattice in $V$. If $V$ is $\Lambda$-rational, we denote $d_\Lambda(V)$ the covolume of $V\cap \Lambda$ in $V$. We define then
$$
\alpha(\Lambda) \coloneqq \sup \left\lbrace d_{\Lambda}(V)^{-1}: V \text{ is a } \Lambda\text{-rational subspace of } \mathbb{R}^{n+2}  \right\rbrace.
$$ 

It follows from Mahler's Compactness Criterion that $\alpha$  is a proper map $\mathcal{L} \rightarrow [1, +\infty)$. We recall below some important properties.

\begin{prop}[\cite{Schmidt1968AsymptoticFF}]
\label{alpha_growth}
If $f:\mathbb{R}^{n+2} \rightarrow \mathbb{R}$ is a bounded function with compact support, then
$$|\hat{f}(\Lambda)| \ll_{supp(f)} ||f||_{\infty} \alpha(\Lambda), \quad \text{ for all } \Lambda \in \mathcal{L}.
$$
\end{prop}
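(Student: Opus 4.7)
The first move is to reduce the estimate to a lattice-point count in a fixed ball. Since $f$ has compact support, choose $R>0$ so that $\operatorname{supp}(f)\subset B_R$, the closed Euclidean ball of radius $R$. Then immediately from the definition of the Siegel transform,
$$
|\hat{f}(\Lambda)| \;\leq\; \|f\|_\infty \cdot \#\bigl(\Lambda \cap B_R \setminus\{0\}\bigr),
$$
so the task reduces to showing $\#(\Lambda\cap B_R\setminus\{0\}) \ll_R \alpha(\Lambda)$ for every unimodular lattice $\Lambda$.

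Next I would invoke Minkowski's theorem on successive minima. Let $\lambda_1\leq\lambda_2\leq\cdots\leq\lambda_{n+2}$ be the successive minima of $\Lambda$ with respect to the unit ball, and choose linearly independent $v_1,\ldots,v_{n+2}\in\Lambda$ with $\|v_i\|=\lambda_i$. A standard consequence is the existence of a basis $w_1,\ldots,w_{n+2}$ of $\Lambda$ with $\|w_i\|\ll_n \lambda_i$ and whose Gram matrix is controlled, which yields the classical counting bound
$$
\#\bigl(\Lambda\cap B_R\bigr) \;\ll_n\; \prod_{i=1}^{n+2} \max\!\left(1,\frac{R}{\lambda_i}\right).
$$
If $\lambda_1>R$ the left side is $1$ and the claim is trivial, so assume $\lambda_1\leq R$ and let $k$ be the largest index with $\lambda_k\leq R$. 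The bound becomes
$$
\#\bigl(\Lambda\cap B_R\setminus\{0\}\bigr) \;\ll_n\; \frac{R^k}{\lambda_1\cdots\lambda_k} \;\ll_R\; \frac{1}{\lambda_1\cdots\lambda_k}.
$$

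The last step is to dominate $1/(\lambda_1\cdots\lambda_k)$ by $\alpha(\Lambda)$. For this I would test the definition of $\alpha$ on the $\Lambda$-rational subspace $V_k\coloneqq \operatorname{span}_{\mathbb{R}}(v_1,\ldots,v_k)$. The vectors $v_1,\ldots,v_k$ lie in $V_k\cap\Lambda$, so by Hadamard's inequality
$$
d_\Lambda(V_k) \;\leq\; \operatorname{covol}(\mathbb{Z}v_1\oplus\cdots\oplus\mathbb{Z}v_k) \;\leq\; \|v_1\|\cdots\|v_k\| \;=\; \lambda_1\cdots\lambda_k.
$$
(Here one should verify that $V_k$ is indeed $\Lambda$-rational, i.e.\ that $V_k\cap\Lambda$ has rank $k$; this is immediate because $v_1,\ldots,v_k\in V_k\cap\Lambda$ are linearly independent, and conversely any point of $V_k\cap\Lambda$ lies in a lattice spanned by these vectors up to a bounded index.) Consequently
$$
\alpha(\Lambda) \;\geq\; d_\Lambda(V_k)^{-1} \;\geq\; \frac{1}{\lambda_1\cdots\lambda_k},
$$
and combining with the lattice-point bound yields $\#(\Lambda\cap B_R\setminus\{0\}) \ll_R \alpha(\Lambda)$, proving the proposition.

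The only genuinely nontrivial step is the counting bound $\#(\Lambda\cap B_R)\ll \prod_i \max(1,R/\lambda_i)$, which requires a Minkowski-reduced basis so that the coordinates of any lattice point of norm $\leq R$ in this basis are controlled by $R/\lambda_i$ up to a dimension-dependent constant; this is classical (e.g.\ \cite{Schmidt1968AsymptoticFF}) and I would simply quote it. Everything else is bookkeeping with successive minima and the definition of $\alpha$.
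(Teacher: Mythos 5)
Your proof is correct. Note that the paper does not supply an argument for this proposition; it is quoted directly from the reference \cite{Schmidt1968AsymptoticFF}, so there is no in-paper proof to compare against. Your derivation is the standard one: reduce to counting $\Lambda\cap B_R$, invoke the Minkowski-type bound $\#(\Lambda\cap B_R)\ll_n\prod_i\max(1,R/\lambda_i)$ in terms of the successive minima, and then dominate $1/(\lambda_1\cdots\lambda_k)$ by $\alpha(\Lambda)$ by testing the $\Lambda$-rational subspace $V_k=\operatorname{span}(v_1,\dots,v_k)$, using that $\mathbb{Z}v_1\oplus\cdots\oplus\mathbb{Z}v_k\subseteq\Lambda\cap V_k$ (so $d_\Lambda(V_k)\leq\operatorname{covol}(\mathbb{Z}v_1\oplus\cdots\oplus\mathbb{Z}v_k)$) together with Hadamard. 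The only minor point worth making explicit is that the $\lambda_1>R$ case is handled by $\alpha(\Lambda)\geq d_\Lambda(\mathbb{R}^{n+2})^{-1}=1$ for unimodular $\Lambda$, which you implicitly use; and in bounding $R^k\ll_R 1$ one uses $k\leq n+2$, so the implied constant depends on $\operatorname{supp}(f)$ and $n$, matching the statement. The argument is essentially the one in the cited source.
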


We restrict this function to the space $\mathcal{X}$ of lattices on the positive light cone and denote it also by $\alpha$. Similarly to its L$^{p}$-integrability in $\mathcal{L}$ (see \cite{eskin1998upper}), we verify in the following proposition that $\alpha$ is also L$^{p}$-integrable in the space $\mathcal{X}$.

\begin{prop} 
\label{alpha_Lp}
The function $\alpha$ is in \emph{L}$^{p}(\mathcal{X})$ for $1\leq p < n$. In particular,
$$ \mu_{\mathcal{X}} (\{\alpha \geq L \}) \ll_p L^{-p},
\quad \text{ for all } p<n.$$
\end{prop}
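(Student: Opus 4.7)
The strategy is to establish the tail bound $\mu_\mathcal{X}(\{\alpha \geq L\}) \ll L^{-n}$, from which the $L^p$-integrability for $p<n$ follows immediately via the layer-cake formula
$$
\int_\mathcal{X} \alpha^p\, d\mu_\mathcal{X} \;=\; p \int_0^\infty L^{p-1}\, \mu_\mathcal{X}(\{\alpha \geq L\})\, dL.
$$
Since $\Gamma = \mathrm{Stab}_G(\Lambda_0)$ also stabilizes the ambient unimodular lattice $\mathbb{Z}^{n+2}$, I will identify $\mathcal{X} = G/\Gamma$ with the $G$-orbit of $\mathbb{Z}^{n+2}$ in $\mathcal{L}$ via $g\Gamma \mapsto g\mathbb{Z}^{n+2}$, so that $\alpha$ on $\mathcal{X}$ is the restriction of the Schmidt function on $\mathcal{L}$ and the tail bound reduces to analyzing the cusps of $\mathcal{X}$.

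Since $Q$ has Witt index one over $\mathbb{Q}$, $G$ has $\mathbb{Q}$-rank one, and Borel--Harish-Chandra reduction theory decomposes $\mathcal{X}$ into a compact core together with finitely many cusps, each corresponding to a $\Gamma$-orbit of primitive isotropic rational vectors and described by a Siegel set $\mathcal{S} = N_\omega \{a_t : t \geq t_0\} K$. Here $P = MAN$ is the minimal $\mathbb{Q}$-parabolic stabilizing $\mathbb{R} e_+$ (with $e_+ := e_{n+1} + e_{n+2}$), with $M \cong \mathrm{SO}(n)$, $A = \{a_t\}$, and unipotent radical $N \cong \mathbb{R}^n$ fixing $e_+$ pointwise. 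Identifying $G/K$ with $\mathbb{H}^{n+1}$ in upper half-space coordinates with $y = e^t$, the hyperbolic volume $dx\, dy/y^{n+1}$ translates into $dn\, e^{-nt}\, dt$, so the Haar measure factors in $NAK$-coordinates as $d\mu_G = dn \cdot e^{-nt}\, dt \cdot dk$ and in particular
$$
\mu_\mathcal{X}\bigl(\mathcal{S} \cap \{t \geq T\}\bigr) \;\ll\; \mathrm{vol}(N_\omega) \int_T^\infty e^{-nt}\, dt \;\ll\; e^{-nT}.
$$

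The heart of the argument is then the pointwise estimate $\alpha(g\mathbb{Z}^{n+2}) \asymp e^t$ on $\mathcal{S}$. For $g = n a_t k \in \mathcal{S}$ and the line $V = g(\mathbb{R} e_+)$, which is $g\mathbb{Z}^{n+2}$-rational since $g^{-1}V = \mathbb{R} e_+$ is $\mathbb{Q}$-rational, the intersection $V \cap g\mathbb{Z}^{n+2} = \mathbb{Z}\, g(e_+) = \mathbb{Z}\, e^{-t} k(e_+)$ has covolume $\sqrt{2}\, e^{-t}$, giving $\alpha(g\mathbb{Z}^{n+2}) \geq e^t/\sqrt{2}$. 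For the matching upper bound, the singular values of $g$ coincide with those of $a_t$ up to bounded factors, namely $(e^t, 1, \ldots, 1, e^{-t})$. For any $\mathbb{Q}$-rational subspace $W \subset \mathbb{R}^{n+2}$ of dimension $k \in \{1, \ldots, n+1\}$, the volume distortion $|\det(g|_W)|$ is at least the product of the $k$ smallest singular values of $g$, which is $\geq c\, e^{-t}$; combining this with the elementary bound $d_{\mathbb{Z}^{n+2}}(W) \geq 1$ (the covolume being the Euclidean norm of a primitive integer vector in $\wedge^k \mathbb{Z}^{n+2}$) yields $d_{g\mathbb{Z}^{n+2}}(gW)^{-1} = |\det(g|_W)|^{-1} d_{\mathbb{Z}^{n+2}}(W)^{-1} \ll e^t$ uniformly in $W$, hence $\alpha(g\mathbb{Z}^{n+2}) \ll e^t$.

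Combining the three ingredients, the preimage of $\{\alpha \geq L\}$ inside each cusp lies in $\{t \gtrsim \log L\}$, so summing over the finitely many cusps we obtain $\mu_\mathcal{X}(\{\alpha \geq L\}) \ll e^{-n\log L} = L^{-n}$, and the $L^p$-integrability claim follows. The main obstacle will be the uniform upper bound $\alpha \ll e^t$ on the Siegel set, which requires controlling $d_\Lambda(V)^{-1}$ \emph{simultaneously} over all $\Lambda$-rational subspaces of every dimension; the singular-value estimate together with the lower bound $d_{\mathbb{Z}^{n+2}}(W) \geq 1$ for $\mathbb{Q}$-rational $W$ plays here the role that the finer analysis of Eskin--Margulis--Mozes in \cite{eskin1998upper} plays for $\alpha$ on $\mathcal{L}$, now adapted to the orthogonal subquotient $\mathcal{X} \hookrightarrow \mathcal{L}$.
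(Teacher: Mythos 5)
Your proof is correct and takes essentially the same route as the paper: reduction theory, the Iwasawa decomposition with Haar measure $\asymp e^{-nt}\,dn\,dt\,dk$, and the estimate $\alpha\asymp e^{t}$ on a Siegel set (the paper obtains the upper bound on $\alpha$ via the quasi-Lipschitz property $\alpha(gx)\ll\alpha(x)$ for $g$ compact together with the explicit formula for $\alpha$ on diagonal lattices, while you use singular values together with $d_{\mathbb{Z}^{n+2}}(W)\geq 1$). You package the computation as the slightly sharper tail bound $\mu_\mathcal{X}(\{\alpha\geq L\})\ll L^{-n}$ rather than the paper's $\ll_p L^{-p}$ for each $p<n$, but the $L^p$-integrability for $p<n$ obtained either way is what is actually used downstream.
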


\begin{proof}
We show $L^p$-integrability of the function $\alpha$ using reduction theory and an explicit measure of integration on $G$. For this purpose we consider a more convenient coordinate system and write
\begin{align*}
\mathrm{Q}(x)&=x_1x_{n+2}+x_2^2+\dots+x_{n+1}^2,~ \text{ for } x=(x_1,\dots,x_{n+2})\in \mathbb{R}^{n+2},\\
\mathrm{G}&=\text{SO}(\mathrm{Q})^\circ \cong \text{SO}(n+1,1)^\circ,\text{ and } \Gamma \text{ the stabilizer of }\Lambda_0 \text{ in }\mathrm{G},\\
\mathrm{A}&=\left\lbrace  \mathrm{a}_t:t\in \mathbb{R} \right\rbrace \subset G\quad \text{with} \quad \mathrm{a}_t:=\text{diag}(e^t,1,\dots,1,e^{-t}),\\
\mathrm{N} &= \left\lbrace  \mathrm{g}\in \mathrm{G} ~ : ~ \mathrm{a}_{-t}\mathrm{g}\mathrm{a}_{t}\rightarrow e \text{ as }t\rightarrow \infty \right\rbrace, \text{ the expanding horospherical subgroup to } \mathrm{A},\\
\mathrm{K}&\cong \text{SO}(n+1) \text{ a maximal compact in }\mathrm{G},
\end{align*}
with their respective Haar measures $\mu_\mathrm{G}$, $dt$, $\mu_\mathrm{N}$, $\mu_\mathrm{K}$, and consider the Iwasawa decomposition $\mathrm{G}=\mathrm{K}\mathrm{A}\mathrm{N}$.\\
We have $\mathrm{N}=\exp(\mathcal{N})$ with the space of positive roots
$$\mathcal{N} ~:=~ \oplus_{\rho(t) >1 } \{\mathcal{g} \in \text{Lie}(\mathrm{G}) : \mathrm{a}_t \mathcal{g}\mathrm{a}_{-t}= \rho(t) \mathcal{g}\}.$$
One verifies easily that the unique positive root is $\rho_0(t)=e^t$ with multiplicity $n$, which yields a measure of integration in the coordinates $\mathrm{G}=\mathrm{K}\mathrm{A}\mathrm{N}$ given by 
$$\mu_\mathrm{G}=\prod_{\rho(t)>1}\rho(t) d\mu_\mathrm{K} dtd\mu_\mathrm{N} = e^{nt}d\mu_\mathrm{K} dtd\mu_\mathrm{N}.$$
For $\beta \in \mathbb{R}$, let $\mathrm{A}_{\beta}\coloneqq \{\text{diag}(e^{t},1,\dots,1, e^{-t}): \: t < \beta \} \: \subset \: \mathrm{A}$. By reduction theory on the space of lattices $\mathcal{X}\cong \mathrm{G}/\Gamma$, we have that there exist $\beta >0$ and a compact set $\mathrm{N}_{0} \subset \mathrm{N}$ such that the union of finitely many translates of the Siegel set $\mathrm{S}_{\beta} := \mathrm{K}\mathrm{A}_{\beta}\mathrm{N}_0$ contains a fundamental domain for the action of $\Gamma$ on $\mathrm{G}$. Thus, it is enough to verify the integrability of $\alpha$ on $\mathrm{S}_{\beta}$. Moreover, since $\mathrm{N}_0$ and $\mathrm{K}$ are compact, $\{\mathrm{a}_t\mathrm{n}\mathrm{a}_{-t}: \mathrm{a}_t\in \mathrm{A}_{\beta}, \mathrm{n}\in \mathrm{N}_0\}$ uniformly bounded, and since there exists $C>0$ such that $\alpha(gx)\leq C\alpha(x)$ for all $x \in \mathcal{X}$ and uniformly for all $g$ in a compact set, we have for any $\mathrm{g}=\mathrm{k}\mathrm{a}_t\mathrm{n} \in \mathrm{S}_{\beta}$, 
$$\alpha(\mathrm{g}\mathbb{Z}^{n+2}) = \alpha(\mathrm{k}\mathrm{a}_t \mathrm{n}\mathrm{a}_{-t}\mathrm{a}_t\mathbb{Z}^{n+2}) \ll \alpha(\mathrm{a}_t\mathbb{Z}^{n+2}).$$ 
By definition of $\alpha$, we have further $\alpha(\mathrm{a}_t\mathbb{Z}^{n+2})= \max_{1\leq j \leq n+1} \prod_{1\leq i \leq j} \mathrm{a}_{t,i,j}^{-1}=e^{-t}$.
Hence,
\begin{align*}
&\int_{\mathcal{X}}\alpha(\Lambda)^{p} d\mu_{\mathcal{X}}(\Lambda) \ll \int_{\mathrm{S}_{\beta}} \alpha(\mathrm{g}\mathbb{Z}^{n+2})^{p}d\mu_\mathrm{G}(\mathrm{g} )\\
&\leq \int_{\mathrm{K}\mathrm{A}_{\beta}\mathrm{N}_0} \alpha(\mathrm{k}\mathrm{a}_t\mathrm{n}\mathbb{Z}^{n+2})^{p}e^{nt}d\mu_\mathrm{K}(\mathrm{k}) dtd\mu_\mathrm{N}(\mathrm{n}) \ll \int_{\mathrm{A}_{\beta}} \alpha(\mathrm{a}_t\mathbb{Z}^{n+2})^{p}e^{nt}dt  = \int_{-\infty}^{\beta} e^{-pt}e^{nt}dt ~ < \infty,
\end{align*}
for all $p < n $.\\
It follows in particular
$$
\mu_{\mathcal{X}}\left( \{ \alpha \geq L\}\right) \ll L^{-p}, \quad \text{for all } p < n.
$$
\end{proof}
Given a bounded measurable function $f:\mathcal{C}\rightarrow \mathbb{R}$ with compact support, we define and denote similarily to (\ref{def siegel transform}) its Siegel transform on the space $\mathcal{X}$,
\begin{equation}
\hat{f}(\Lambda) \coloneqq \sum_{z \in \Lambda \setminus \{0\}} f(z), \quad \text{for } \Lambda \in \mathcal{X}.
\end{equation}
We recall the Siegel Mean Value Theorem in the space of unimodular lattices $\mathcal{L}$ (see \cite{Siegel1945AMV}) and verify in the following proposition an analogous result for the space $\mathcal{X}$.

\begin{prop} 
\label{siegel mean value thrm}
If $f:\mathcal{C} \rightarrow \mathbb{R}$ is a bounded Riemann integrable function with compact support, then
$$ \int_{\mathcal{X}} \hat{f}(\Lambda) d\mu_{\mathcal{X}}(\Lambda) = \int_{\mathcal{C}} f(x) dx  $$
for some $G$-invariant measure $dx$ on $\mathcal{C}$.
\end{prop}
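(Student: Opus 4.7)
The plan is to mimic Siegel's original argument for $\mathcal{L}$, but work on the homogeneous space $\mathcal{C} \cong G/H$, where $H$ denotes the stabilizer in $G$ of a fixed point in $\mathcal{C}$. First I would define the linear functional
\[
I(f) \coloneqq \int_{\mathcal{X}} \hat{f}(\Lambda)\, d\mu_{\mathcal{X}}(\Lambda)
\]
on $C_c(\mathcal{C})$, and check it is well-defined: by Proposition \ref{alpha_growth}, $|\hat{f}(\Lambda)| \ll_{\mathrm{supp}(f)} \|f\|_\infty\, \alpha(\Lambda)$, and by Proposition \ref{alpha_Lp}, $\alpha \in L^1(\mathcal{X})$ (since $n\geq 2$), so $\hat{f} \in L^1(\mathcal{X})$ with $|I(f)| \ll_{\mathrm{supp}(f)} \|f\|_\infty$. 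Thus $I$ is a continuous positive linear functional on $C_c(\mathcal{C})$.

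The next step is to establish $G$-invariance of $I$. For any $g\in G$ and $\Lambda \in \mathcal{X}$, the definition of the Siegel transform gives $\widehat{f\circ g}(\Lambda) = \sum_{z\in \Lambda\setminus\{0\}} f(gz) = \hat{f}(g\Lambda)$. Combining this equivariance with the $G$-invariance of $\mu_{\mathcal{X}}$ yields
\[
I(f\circ g) \;=\; \int_{\mathcal{X}} \hat{f}(g\Lambda)\, d\mu_{\mathcal{X}}(\Lambda) \;=\; I(f).
\]
By the Riesz representation theorem, $I$ is given by integration against a Radon measure $\nu$ on $\mathcal{C}$, and the preceding identity forces $\nu$ to be $G$-invariant. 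Since $G$ acts transitively on the positive light cone $\mathcal{C}$ (as the orbit of $(0,\dots,0,1,1)$ under $G=\mathrm{SO}(Q)^\circ$), $\mathcal{C}$ is a homogeneous space $G/H$ whose isotropy group $H$ is isomorphic to $\mathrm{SO}(n)\ltimes \mathbb{R}^n$ and is unimodular; hence $\mathcal{C}$ carries a $G$-invariant Radon measure $dx$, unique up to a positive scalar. Therefore $\nu = c_0\, dx$, and after absorbing $c_0$ into the choice of $dx$ we obtain the claimed identity for every $f\in C_c(\mathcal{C})$.

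Finally, I would extend the equality from $C_c(\mathcal{C})$ to bounded compactly supported Riemann integrable $f$ by the usual sandwiching: for any $\varepsilon>0$, choose continuous $f^{\pm}_{\varepsilon}\in C_c(\mathcal{C})$ with $f^-_{\varepsilon}\leq f\leq f^+_{\varepsilon}$ and $\int_{\mathcal{C}}(f^+_{\varepsilon}-f^-_{\varepsilon})\, dx<\varepsilon$; since $\hat{f}^-_{\varepsilon}\leq \hat{f}\leq \hat{f}^+_{\varepsilon}$ pointwise on $\mathcal{X}$, integrating against $\mu_{\mathcal{X}}$ and letting $\varepsilon\to 0$ gives the result.

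The main obstacle I expect is not the formal calculation but the integrability check: one needs Proposition \ref{alpha_Lp} to guarantee that $\hat{f}$ is actually integrable on $\mathcal{X}$, so that the functional $I$ is well-defined and continuous. The $G$-equivariance $\widehat{f\circ g} = \hat{f}\circ g$ of the Siegel transform is automatic, and the appeal to uniqueness of invariant measures on $G/H$ is standard once $H$ is identified as unimodular.
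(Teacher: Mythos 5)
Your proof is correct and follows essentially the same strategy as the paper's (define the positive $G$-invariant functional $f\mapsto \int_{\mathcal{X}}\hat{f}$, invoke Riesz representation, and use uniqueness of the $G$-invariant measure on the homogeneous cone $\mathcal{C}$), which the paper compresses into two sentences. You additionally spell out two details the paper leaves implicit: the integrability check $\hat f\in L^1(\mathcal{X})$ via Propositions \ref{alpha_growth} and \ref{alpha_Lp} (needed for the functional to be finite, and requiring $n\geq2$), and the standard squeeze argument extending the identity from $C_c(\mathcal{C})$ to bounded compactly supported Riemann integrable $f$.
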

\begin{proof}
The map $f\mapsto \int_{\mathcal{X}}\hat{f}$ defines a positive $G$-invariant linear functional on the space of continuous compactly supported functions on $\mathcal{C}$, hence a $G$-invariant measure on $\mathcal{C}$. Uniqueness up to multiplication by a scalar of the invariant measure yields the claim.
\end{proof}
\subsection{Non-divergence estimates}
\label{subsection non-divergence}
In this subsection, we establish bounds for the Siegel transform $\hat{f}$ on translated $K$-orbits by analyzing the escape of mass on submanifolds $a_t\mathcal{Y} \subset \mathcal{X}$.

Following the same argument as in \cite{bjoerklund2018central} (Proposition 4.5) and using effective equidistribution of translated $K$-orbits (which we establish later in Proposition \ref{double equidistribution K orbits}) and L$^p$-integrability of the function $\alpha$ established in Proposition \ref{alpha_Lp}, we verify an analogous non-escape of mass in $a_t\mathcal{Y}$.

\begin{prop}
\label{non-espace of mass}
There exists $\kappa >0$ such that for every $L\geq 1$ and $t\geq \kappa \log L$, 
$$\mu_{\mathcal{Y}} (\{ y \in \mathcal{Y} : \alpha(a_t y) \geq L  \}) \ll_p L^{-p}, \quad \text{ for all } p<n.
$$
\end{prop}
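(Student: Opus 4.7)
The idea is to majorize the characteristic function of $\{\alpha \geq L\}$ by a smooth, compactly supported test function $\phi_{L,M} \in C_c^\infty(\mathcal{X})$ whose $C^l$-norm is bounded uniformly in $L$ and $M$, and then combine effective equidistribution of $K$-orbits (Proposition \ref{double equidistribution K orbits}) with the global $L^p$-integrability of $\alpha$ (Proposition \ref{alpha_Lp}). Because $\alpha$ is only continuous, the naive cutoff $\psi(\alpha/L)$ does not yield a usable $C^l$-bound, so the smoothing has to be carried out in the $G$-direction.

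\textbf{Construction of the cutoff.} Fix once and for all a nonnegative bump $\eta \in C_c^\infty(G)$ with $\int_G \eta \, d\mu_G = 1$, supported in a neighborhood of the identity small enough that $\alpha(gx) \in [\tfrac12 \alpha(x),\, 2\alpha(x)]$ for every $g \in \text{supp}\,\eta$ and every $x \in \mathcal{X}$; this is possible because $\alpha$ is quasi-invariant under compact subsets of $G$. For $L \geq 1$ and $M \geq L$ set
$$
\phi_{L,M}(x) := \int_G \mathbf{1}_{[L/2,\, 2M]}\!\bigl(\alpha(gx)\bigr)\, \eta(g)\, d\mu_G(g).
$$
The quasi-invariance forces $\mathbf{1}_{\{L \leq \alpha \leq M\}} \leq \phi_{L,M} \leq \mathbf{1}_{\{L/4 \leq \alpha \leq 4M\}}$, and Mahler's criterion makes the right-hand sublevel set compact. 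Transferring derivatives in $x$ onto $\eta$ via right-translation in $G$ shows $\|\phi_{L,M}\|_l \leq C_l$ for some constant $C_l$ depending only on $\eta$ and $l$, uniformly in $L$ and $M$.

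\textbf{Equidistribution and conclusion.} Applying Proposition \ref{double equidistribution K orbits} to $\phi_{L,M}$ (or, more precisely, its single-orbit first moment consequence) yields a rate $\delta > 0$ independent of $L$ and $M$ with
$$
\int_\mathcal{Y} \phi_{L,M}(a_t y)\, d\mu_\mathcal{Y}(y) = \int_\mathcal{X} \phi_{L,M}\, d\mu_\mathcal{X} + O\!\bigl(e^{-\delta t}\bigr),
$$
and Proposition \ref{alpha_Lp} bounds the main term by $\mu_\mathcal{X}(\{\alpha \geq L/4\}) \ll_p L^{-p}$ for every $p < n$. Combining these with $\phi_{L,M} \geq \mathbf{1}_{\{L \leq \alpha \leq M\}}$ and letting $M \to \infty$ by monotone convergence gives
$$
\mu_\mathcal{Y}\!\bigl(\{y : \alpha(a_t y) \geq L\}\bigr) \ll_p L^{-p} + e^{-\delta t},
$$
and choosing $\kappa \geq n/\delta$ forces $e^{-\delta t} \leq L^{-n} \leq L^{-p}$ for any $p < n$ whenever $t \geq \kappa \log L$, handling all admissible exponents simultaneously.

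The one delicate point is securing the uniform $C^l$-bound on $\phi_{L,M}$: this is why the cutoff is built by convolving the bounded indicator $\mathbf{1}_{[L/2,\, 2M]} \circ \alpha$ against a \emph{fixed}-scale bump on $G$, rather than by truncating $\alpha$ itself at scale $L$ or by convolving with an $L$-dependent smoothing, either of which would introduce $L$-powers in $\|\phi_{L,M}\|_l$ that could not be absorbed by the equidistribution rate.
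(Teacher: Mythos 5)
Your proof is correct and follows essentially the same strategy as the paper: smooth the indicator of a level set of $\alpha$ by convolving against a fixed bump on $G$ (so the $C^l$-norm is uniform in $L$), apply effective equidistribution of translated $K$-orbits, and control the main term with the $L^p$-integrability of $\alpha$. The only cosmetic difference is that the paper convolves the indicator of the sublevel set $\{\alpha<L\}$ (already compact by Mahler) and bounds the complementary probability from below, whereas you convolve the windowed indicator $\mathbf 1_{\{L\le\alpha\le M\}}$ and let $M\to\infty$ at the end — two packagings of the same idea.
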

\begin{proof}
Let $\chi_L$ be the characteristic function of the set $\{\alpha < L \} \subset \mathcal{X} $. By Mahler's Compactness Criterion, $\chi_L$ has compact support. Let $\rho \in C_c^{\infty}(G)$ be a non-negative function with $\int_G \rho =1$ and define 
$$
\eta_L(x) \coloneqq (\rho \ast \chi_L)(x) = \int_G \rho(g)\chi_L(g^{-1}x)d \mu_G(g), \quad \text{for all } x \in \mathcal{X}.
$$ 
Since $\mu_\mathcal{X}$ is $G$-invariant, we have 
$$
\int_\mathcal{X}\eta_L d\mu_\mathcal{X} = \int_\mathcal{X}\chi_Ld\mu_\mathcal{X}=\mu_\mathcal{X}(\{\alpha<L \}).
$$
Moreover, by invariance of $\mu_G$, we have for any differential operator $D_Z$ as in (\ref{diff operator}) that $D_Z\eta_L=D_Z(\rho)\ast \chi_L$, hence $\eta_L \in C_c^{\infty}(\mathcal{X})$ with $||\eta_L ||_{C^l}\ll ||\rho ||_{C^l} $.\\
Further, one can verify that there exists $c>1$ such that $\alpha(g^{-1}x)\geq c^{-1}\alpha(x)$ for every $g \in$ supp$(\rho)$ and $x \in \mathcal{X}$, hence $\{ \alpha\circ g^{-1} < L\} \subset \{\alpha < cL \} $ and $\eta_L \leq \chi_{cL} $. Thus,
\begin{align*}
\mu_{\mathcal{Y}}(\{y\in\mathcal{Y}:\alpha( a_ty) < cL \}) = \int_{\mathcal{Y}}\chi_{cL}(a_ty)d\mu_\mathcal{Y}(y) \geq \int_\mathcal{Y}\eta_L(a_ty)d\mu_\mathcal{Y}(y).
\end{align*}
By exponential equidistribution of translated $K$-orbits, which we prove later in Proposition \ref{double equidistribution K orbits}, there exist $\gamma>0$ and $l\geq 1$ such that
\begin{align*}
\int_\mathcal{Y}\eta_L(a_ty)d\mu_\mathcal{Y}(y) &= \int_\mathcal{X}\eta_L d\mu_\mathcal{X}+ O\left(e^{-\gamma t}||\eta_L||_{C^l} \right)\\
&=\mu_\mathcal{X}(\{\alpha<L \})+ O\left(e^{-\gamma t} 
\right),
\end{align*}
and by Proposition \ref{alpha_Lp},
$$
\mu_\mathcal{X}(\{\alpha\geq L \})\ll_p L^{-p} \quad \text{for all } p<n.
$$
Altogether we obtain
$$
\mu_\mathcal{Y}(\{y \in \mathcal{Y}: \alpha(a_ty)<cL \})\geq \mu_\mathcal{X}(\{\alpha<L \})+O\left(e^{-\gamma t} \right)=1+O_p\left(L^{-p}+e^{-\gamma t} \right),
$$
thus
$$
\mu_\mathcal{Y}(\{y\in\mathcal{Y}:\alpha(a_ty)\geq cL \}) \ll_p L^{-p}+e^{-\gamma t},
$$
which yields the claim for $s\geq \kappa\log L$ with $\kappa=\frac{p}{\gamma}$.
\end{proof}
An important estimate in our argument later is the integrability of the Siegel transform $\hat{f}$ on $a_t\mathcal{Y}$ uniformly in $t$. We use the following integrability estimate for the function $\alpha$. 

\begin{prop}[\cite{eskin1998upper}]
\label{bounds siegel transform}
If $n \geq 2$ and $0<s<2$, then for any lattice $\Lambda$ in $\mathbb{R}^{n+2}$
$$ \sup_{t> 0} \int_{K} \alpha ( a_t k \Lambda)^s d\mu_{K}(k) < \infty.$$
\end{prop}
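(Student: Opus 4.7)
The plan is to decompose $\alpha$ into finitely many simpler alpha functions on exterior powers and then establish a uniform single-vector spherical $L^s$-estimate, which is finally iterated by a contraction argument.

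First I would bound $\alpha(\Lambda) \ll \max_{1\leq j \leq n+1}\alpha_j(\Lambda)$, where $\alpha_j$ is the restriction of the sup in the definition of $\alpha$ to $j$-dimensional $\Lambda$-rational subspaces. Via the Pl\"ucker embedding, $\alpha_j(\Lambda) = \sup\{\|v\|^{-1} : v \in \wedge^j\Lambda \text{ primitive}\}$, the reciprocal of the first successive minimum of the induced lattice in $\wedge^j\mathbb{R}^{n+2}$. Since there are only finitely many $j$, it suffices to prove $\sup_{t>0}\int_K \alpha_j(a_tk\Lambda)^s\,d\mu_K < \infty$ for each $j$ separately.

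The technical heart of the argument is the uniform single-vector spherical estimate
$$
\sup_{t\geq 0}\int_K \|a_tkw\|^{-s}\,d\mu_K(k) \ll \|w\|^{-s}
$$
for every nonzero $w\in\wedge^j\mathbb{R}^{n+2}$ and every $0<s<2$. To prove it I would use that $K=\mathrm{SO}(n+1)$ fixes $e_{n+2}$, so the induced representation on $\wedge^j\mathbb{R}^{n+2}$ admits a $K$-invariant decomposition, and $a_t$ acts non-trivially only on the isotypic components containing exactly one of $e_{n+1}$ or $e_{n+2}$, where it is an explicit $\mathrm{SO}(1,1)$-boost. Reducing by $K$-invariance to a single sphere coordinate $u$ whose marginal density from the uniform measure on $S^n$ is $(1-u^2/r^2)^{(n-2)/2}$ with $r=\|w\|$ (well-defined since $n\geq 2$), the problem becomes a one-dimensional integral of the form $\int (r^2+e^{2t}(u-u_0)^2)^{-s/2}(1-u^2/r^2)^{(n-2)/2}\,du$, which an elementary substitution $\xi=e^t(u-u_0)$ shows is $\ll r^{-s}$ uniformly in $t$ provided $s<2$.

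Finally, I would convert the single-vector bound into a bound on $\alpha_j$ by an Eskin-Margulis-type contraction: establish a one-step inequality
$$
\int_K \alpha_j(a_{t_0}k\Lambda)^s\,d\mu_K \leq \lambda\,\alpha_j(\Lambda)^s + B
$$
for some fixed $t_0>0$, $\lambda<1$, and $B>0$ independent of $\Lambda$, and then iterate in $t$. The main obstacle is precisely this final step: the naive bound replacing the supremum over infinitely many primitive lattice vectors by their sum produces a divergent series in the range $s<2$, so the contraction constant must instead be extracted directly from the single-vector spherical estimate, with care to remain uniform in $\Lambda$. This is where the hypothesis $n\geq 2$ supplies the geometric transversality in the marginal density needed to force $\lambda<1$.
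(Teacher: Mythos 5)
The paper does not prove this proposition --- it is cited directly from Eskin--Margulis--Mozes \cite{eskin1998upper} as a black box, so there is no in-paper argument to compare against. Your sketch is a reconstruction of the EMM strategy: decompose $\alpha$ into the functions $\alpha_j$ on exterior powers, prove a single-vector spherical estimate, and iterate a contraction inequality. That is indeed the right skeleton, and it is valuable that you flagged the divergence of the naive sum-over-primitive-vectors bound in the range $s<2$; this is precisely why EMM cannot simply replace the supremum by a Siegel-type sum.

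Two steps in your sketch, however, are genuine gaps rather than elisions. First, the reduction of $\int_K\|a_tkw\|^{-s}\,d\mu_K$ to a one-dimensional integral with marginal density $(1-u^2/r^2)^{(n-2)/2}$ is correct only for $j=1$, where the $K$-orbit of $w/\|w\|$ sits inside a sphere $S^n$ and a single ``longitudinal'' coordinate governs the expanding component under $a_t$. For $j\geq 2$ the $K$-orbit of a unit vector in $\wedge^j\mathbb{R}^{n+2}$ is not a round sphere, and the expanding $a_t$-weight subspace has dimension $\binom{n}{j-1}$, not $1$; you need the weight decomposition in terms of the $e^{\pm t}$-eigenvectors $e_{n+1}\mp e_{n+2}$ (not the coordinate vectors $e_{n+1},e_{n+2}$ themselves, as written), and then a transversality argument that the vanishing locus of the expanding projection $\pi_+(kw)$ has codimension $\geq 2$ in $K$ uniformly over directions $w$. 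Without that codimension count the claimed threshold $s<2$ is unsupported. Second, the conversion of the single-vector estimate into a contraction $\int_K\alpha_j(a_{t_0}k\Lambda)^s\,d\mu_K\leq\lambda\alpha_j(\Lambda)^s+B$ is not merely ``extracted from the single-vector estimate with care''; it is the content of EMM's Lemma 5.6, which exploits the combinatorial structure of the set of near-maximizing sublattices (roughly, that there are boundedly many of them in a controlled configuration, so the supremum can be compared to a short sum with a uniform additive error). Your sketch names this as the obstacle but does not supply the mechanism, so as written the argument does not close. Given that the paper treats the statement as a citation, the honest move here is to either carry out the EMM lemma in this special case or leave the proposition as a quoted result, as the paper does.
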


\subsection{Truncated Siegel transform}
	The Siegel transform of a smooth compactly supported function is typically not bounded. To be able to apply equidistribution results, we truncate the Siegel transform using a smooth cut-off function $\eta_L$ built on the function $\alpha$. We use the same construction as in \autocite[Lemma 4.9]{bjoerklund2018central} which yields the following lemma.

\begin{lem}
For every $\xi>1$, there exists a family $(\eta_L)$ in C$_c^{\infty}(\mathcal{X})$ satisfying:
$$ 0 \leq \eta_L \leq 1 , \quad \eta_L = 1 \text{ on } \{ \alpha \leq \xi^{-1}L \} , \quad \eta_L = 0 \text{ on } \{ \alpha > \xi L \} , \quad || \eta_L||_{C^{l}} \ll 1.\\
$$
\end{lem}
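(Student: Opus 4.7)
The plan is to obtain $\eta_L$ by convolving the sharp cutoff of $\{\alpha \leq L\}$ with a smooth bump on the group $G$, exploiting the fact that $\alpha$ is only multiplicatively distorted by the $G$-action near the identity. The decisive point is that any derivative in a direction tangent to the $G$-orbit is absorbed by the bump function rather than by the sharp cutoff, so all $C^l$-norms come out bounded uniformly in $L$.

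Concretely, fix any $\xi_0 \in (1,\xi)$ and choose $\rho \in C_c^\infty(G)$ nonnegative with $\int_G \rho\,d\mu_G = 1$ and support contained in a symmetric identity neighborhood $U \subset G$ small enough that
\[
\xi_0^{-1}\,\alpha(x) \;\leq\; \alpha(g^{-1}x) \;\leq\; \xi_0\,\alpha(x), \qquad \text{for all } g\in U,\; x \in \mathcal{X}.
\]
Such a $U$ exists because $\alpha$ is quasi-invariant under compact neighborhoods of the identity, a property already invoked in the proof of Proposition \ref{non-espace of mass}. Setting $\chi_L \coloneqq \mathbf{1}_{\{\alpha \leq L\}}$, I define
\[
\eta_L(x) \coloneqq (\rho \ast \chi_L)(x) = \int_G \rho(g)\,\chi_L(g^{-1}x)\,d\mu_G(g).
\]

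The four required properties then fall out directly. The bounds $0 \leq \eta_L \leq 1$ come from $\chi_L\in[0,1]$ and $\int\rho=1$. If $\alpha(x) \leq \xi^{-1}L$, then for every $g \in \text{supp}(\rho)$ one has $\alpha(g^{-1}x) \leq \xi_0\,\alpha(x) \leq \xi_0\,\xi^{-1}L < L$, so the integrand is identically $1$ and $\eta_L(x) = 1$; if $\alpha(x) > \xi L$, then $\alpha(g^{-1}x) \geq \xi_0^{-1}\,\alpha(x) > \xi_0^{-1}\,\xi L > L$, forcing $\eta_L(x) = 0$. Compact support of $\eta_L$ inside $\{\alpha \leq \xi L\}$ then follows from Mahler's compactness criterion.

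For the uniform $C^l$-bound I would use unimodularity of $G$ to transfer derivatives from the cutoff to the bump: by the standard change of variable $g\mapsto \exp(tY)g$, one obtains $D_Y \eta_L = (\tilde D_Y \rho) \ast \chi_L$ where $\tilde D_Y$ is the corresponding invariant differential operator on $G$, and iterating gives $D_Z \eta_L = (\tilde D_Z \rho) \ast \chi_L$ for any monomial $Z$. Hence
\[
\|D_Z \eta_L\|_\infty \;\leq\; \|\tilde D_Z \rho\|_{L^1(G)} \cdot \|\chi_L\|_\infty \;\leq\; \|\tilde D_Z \rho\|_{L^1(G)},
\]
and summing over $\deg(Z)\leq l$ yields $\|\eta_L\|_{C^l} \ll_{l,\rho} 1$, independent of $L$. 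There is no serious obstacle in this argument; the only technical care is to pick $\xi_0$ strictly between $1$ and $\xi$, which absorbs the level set $\{\alpha = L\}$ and makes all support inclusions strict as required.
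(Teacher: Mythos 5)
Your proof is correct and takes essentially the same route the paper does: the paper invokes the construction of \cite{bjoerklund2018central} (Lemma 4.9), which is precisely the convolution $\eta_L=\rho\ast\chi_L$ on $G$ that you build, and the same formula already appears in the proof of Proposition \ref{non-espace of mass}. Your handling of the auxiliary constant $\xi_0\in(1,\xi)$ and the transfer of derivatives onto $\rho$ via unimodularity of $G$ is exactly the right level of care.
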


For a bounded function $f:\mathcal{C} \rightarrow \mathbb{R}$ with compact support, we define the \emph{truncated Siegel transform} of $f$ by
$$
\hat{f}^{(L)} \coloneqq \hat{f}\cdot \eta_L.
$$

We record in the following proposition some properties of the truncated Siegel transform $\hat{f}^{(L)}$ which we use later in our arguments.  

\begin{prop}
\label{bounds truncated siegel transform}
For a bounded measurable function $f:\mathcal{C} \rightarrow \mathbb{R}$ with compact support, the truncated Siegel transform $\hat{f}^{(L)}$ satisfies the following bounds:
\begin{align}
& ||\hat{f}^{(L)} ||_{\infty} \ll_{\emph{supp}(f)} L||f ||_{\infty},\\
& \sup_{t\geq 0}||\hat{f}^{(L)}\circ a_t ||_{L^{p}_\mathcal{Y}} < \infty ~, ~~ \text{for all } 1\leq p<2,\\
& ||\hat{f} - \hat{f}^{(L)} ||_{L^{1}_\mathcal{X}} \ll_{\emph{supp}(f),\tau} L^{-(\tau-1)}||f ||_{\infty} \quad \text{, for all } \tau < n,\\
&||\hat{f}\circ a_t - \hat{f}^{(L)}\circ a_t ||_{L^{p}_\mathcal{Y}} \ll_{\emph{supp}(f),\tau} L^{-\frac{\tau(2-p)}{2p}}||f ||_{\infty} ~, ~~ \text{for all } 1\leq p<2, ~\tau<n \text{ and } t\geq \kappa \log L.
\end{align}
Moreover, if $f \in C_c^{\infty}(\mathcal{C})$ then $\hat{f}^{(L)} \in C_c^{\infty}(\mathcal{X})$ and satisfies
\begin{align}
\label{bound for Cl norm truncated}
||\hat{f}^{(L)} ||_{C^l} \ll_{\emph{supp}(f)} L||f ||_{C^l} \quad \text{, for all } l\geq 1.
\end{align}
\end{prop}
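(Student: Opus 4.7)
My plan is to derive all five bounds from three ingredients already available: the pointwise control $|\hat{f}(\Lambda)| \ll_{\mathrm{supp}(f)} \|f\|_\infty\,\alpha(\Lambda)$ (Proposition \ref{alpha_growth}), the tail estimates $\mu_\mathcal{X}(\alpha \geq L) \ll_\tau L^{-\tau}$ and $\mu_\mathcal{Y}(\alpha\circ a_t \geq L) \ll_\tau L^{-\tau}$ for $\tau < n$ and $t \geq \kappa\log L$ (Propositions \ref{alpha_Lp} and \ref{non-espace of mass}), and the uniform $K$-orbit bound $\sup_t \int_K \alpha(a_t k \Lambda_0)^s\,d\mu_K < \infty$ for $0 < s < 2$ (Proposition \ref{bounds siegel transform}). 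The cut-off $(\eta_L)$ from the preceding lemma satisfies $\eta_L \leq \chi_{\{\alpha \leq \xi L\}}$ and $1-\eta_L \leq \chi_{\{\alpha \geq \xi^{-1} L\}}$, with $\|\eta_L\|_{C^l} \ll 1$.

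The sup-bound (1) is immediate: on $\mathrm{supp}(\eta_L)$ one has $\alpha \leq \xi L$, so Proposition \ref{alpha_growth} gives $|\hat{f}^{(L)}(\Lambda)| \leq |\hat{f}(\Lambda)|\eta_L(\Lambda) \ll \|f\|_\infty L$. For the $C^l$-bound (5), I would verify that the Siegel transform intertwines each $D_Y$ (for $Y \in \mathrm{Lie}(G)$) with differentiation of $f$ along the infinitesimal action of $Y$ on $\mathcal{C}$; iterating for a monomial $Z$ of degree at most $l$ yields $|D_Z\hat{f}(\Lambda)| \ll_{\mathrm{supp}(f)} \|f\|_{C^l}\,\alpha(\Lambda)$. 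Expanding $D_Z(\hat{f}\cdot\eta_L)$ by the Leibniz rule and using $\|\eta_L\|_{C^l} \ll 1$ together with $\alpha \leq \xi L$ on $\mathrm{supp}(\eta_L)$ produces the desired bound, and compactness of $\mathrm{supp}(\eta_L)$ ensures $\hat{f}^{(L)} \in C_c^\infty(\mathcal{X})$.

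Bound (2) follows from the pointwise estimate $|\hat{f}^{(L)}| \leq |\hat{f}| \ll \|f\|_\infty\,\alpha$ after transferring the $\mathcal{Y}$-integral to $K$ via $\mathcal{Y} = K\Lambda_0$:
\begin{equation*}
\|\hat{f}^{(L)} \circ a_t\|_{L^p_\mathcal{Y}}^p \ll \|f\|_\infty^p \int_K \alpha(a_t k\Lambda_0)^p\,d\mu_K,
\end{equation*}
which is uniformly finite for $p < 2$ by Proposition \ref{bounds siegel transform}. For (3), the layer-cake identity together with Proposition \ref{alpha_Lp} gives $\int_{\{\alpha \geq L\}} \alpha\,d\mu_\mathcal{X} \ll_\tau L^{1-\tau}$ for any $\tau \in (1,n)$, and multiplying by $\|f\|_\infty$ closes this case.

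I expect the main obstacle to be (4), since it requires combining the $\mathcal{Y}$-tail bound (available only for $t \geq \kappa\log L$) with the $K$-orbit $L^s$-bound that holds only for $s < 2$. My plan is to dominate $|\hat{f}-\hat{f}^{(L)}|^p \ll \|f\|_\infty^p\,\alpha^p\,\chi_{\{\alpha \geq \xi^{-1}L\}}$ and apply H\"older on $\mathcal{Y}$ with conjugate exponents $s/p$ and $s/(s-p)$ for $s \in (p,2)$:
\begin{equation*}
\|(\hat{f}-\hat{f}^{(L)})\circ a_t\|_{L^p_\mathcal{Y}}^p \ll \|f\|_\infty^p \left(\int_\mathcal{Y} \alpha(a_t y)^{s}\,d\mu_\mathcal{Y}\right)^{p/s} \mu_\mathcal{Y}\!\left(\alpha\circ a_t \geq \xi^{-1}L\right)^{1-p/s}.
\end{equation*}
The first factor is uniformly bounded by Proposition \ref{bounds siegel transform}, and the tail factor is $\ll L^{-\tau(1-p/s)}$ by Proposition \ref{non-espace of mass}, provided $t \geq \kappa\log L$. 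Letting $s \uparrow 2$ with $\tau < n$ arbitrary yields the optimal exponent $\tau(2-p)/(2p)$, matching the claim.
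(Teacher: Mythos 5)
Your proof is correct and takes essentially the same route as the paper: bounds (1), (2) and (5) follow exactly as you describe from Propositions \ref{alpha_growth}, \ref{bounds siegel transform} and the cut-off properties, and bound (4) uses the same H\"older argument with exponents $s/p$ and $s/(s-p)$ combined with Propositions \ref{non-espace of mass} and \ref{bounds siegel transform}, with the same limiting step $s\uparrow 2$. The only cosmetic difference is in (3), where the paper applies H\"older's inequality with $\|\alpha\|_{L^p}$ against $\mu_\mathcal{X}(\alpha\geq\xi^{-1}L)^{1/q}$ while you use the layer-cake formula for $\int_{\{\alpha\geq L\}}\alpha\,d\mu_\mathcal{X}$; both yield $\ll L^{-(\tau-1)}$ from the same tail estimate in Proposition \ref{alpha_Lp}.
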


\begin{proof}
Since supp$(\eta_L) \subset \{\alpha \leq \xi L \}$, the first estimate follows from Proposition \ref{alpha_growth}.\\
Since $0\leq \eta_L\leq 1$, the second estimate follows immediately from Proposition \ref{bounds siegel transform}.\\
Further, since $\eta_L=1$ on $\{ \alpha \leq \xi^{-1}L\}$, it follows from Proposition \ref{alpha_growth} that
$$
\lVert \hat{f}-\hat{f}^{(L)} \rVert_{L^1_\mathcal{X}} = \int_{\mathcal{X}}|\hat{f}||1-\eta_L|d\mu_{\mathcal{X}} \ll_{\emph{supp}(f)} \int_{\{\alpha \geq \xi^{-1}L \}} \alpha d\mu_{\mathcal{X}}~||f ||_{\infty}.
$$
We apply Hölder's Inequality with $1 \leq p < n$ and $q=(1-1/p)^{-1}$ and deduce
$$
\lVert \hat{f}-\hat{f}^{(L)} \rVert_{L^1_\mathcal{X}} \ll_{\emph{supp}(f)}  ||\alpha ||_{L_{\mathcal{X}}^p} ~ \mu_{\mathcal{X}}(\{\alpha \geq \xi^{-1}L \})^{1/q}~||f ||_{\infty}.
$$
Then Proposition \ref{alpha_Lp} implies
$$
\lVert \hat{f}-\hat{f}^{(L)} \rVert_{L^1_\mathcal{X}} \ll_{\emph{supp}(f),p}  L^{-(p-1)}~||f ||_{\infty}.
$$
Similarly, Hölder's Inequality with $1\leq p<s<2$ and $q=(1/p-1/s)^{-1}$ gives
$$
\lVert \hat{f}\circ a_t-\hat{f}^{(L)}\circ a_t \rVert_{L^{p}_\mathcal{Y}} \ll_{\emph{supp}(f)}  ||\alpha \circ a_t ||_{L_{\mathcal{Y}}^s} ~ \mu_{\mathcal{Y}}(\{\alpha\circ a_t \geq \xi^{-1}L \})^{1/q}||f ||_{\infty},
$$
then Propositions \ref{non-espace of mass} and \ref{bounds siegel transform} imply, for all $t\geq \kappa \log L$ and $\varepsilon>0$,
\begin{align*}
\lVert \hat{f}\circ a_t-\hat{f}^{(L)}\circ a_t \rVert_{L^{p}_\mathcal{Y}} &\ll_{\emph{supp}(f),s,\varepsilon}  L^{-(n-\varepsilon)\frac{s-p}{sp}}||f ||_{\infty}\\
&\ll_{\emph{supp}(f),\tau} L^{\frac{-\tau (2-p)}{2p}}||f ||_{\infty}~, \text{ for all } \tau<n.
\end{align*}
For $f \in C_c^{\infty}(\mathcal{C})$ and any differential operator $D_Z$ as in (\ref{diff operator}), we observe that $D_Z(\hat{f})=\widehat{D_Z(f)}$. Hence, Proposition \ref{alpha_growth} implies
$$ |D_Z(\hat{f})| \ll_{\text{supp}(f)} || f||_{C^l} ~\alpha.
$$
Since supp$(\eta_L) \subset \{\alpha \leq \xi L \}$ and $||\eta_L ||_{C^l} \ll 1$, it follows 
$$ ||\hat{f}^{(L)} ||_{C^l} \ll_{\text{supp}(f)} L ||f ||_{C^l}.
$$
\end{proof}

\subsection{Smooth approximation of the counting function}
	For simplicity we write $\chi\coloneqq \chi_{F_{1,c}}$ for the characteristic function of the set $F_{1,c}$. We approximate $\chi$ by a family of non-negative functions $f_{\varepsilon} \in C_{c}^{\infty}(\mathcal{C})$ with support in an $\varepsilon$-neighborhood of $F_{1,c}$ such that
\begin{equation}
\label{epsilon approximation of chi}
\chi \leq f_{\varepsilon} \leq 1, \quad ||f_{\varepsilon} - \chi||_{\text{L}^{1}_\mathcal{C}} \ll \varepsilon , \quad ||f_{\varepsilon} - \chi||_{\text{L}^{2}_\mathcal{C}} \ll \varepsilon^{1/2}, \quad ||f_{\varepsilon}||_{C^{l}} \ll \varepsilon^{-l}.
\end{equation}

The following proposition shows that this approximation of $\chi$ also yields a good approximation of its Siegel transform $\hat{\chi}$ on translated $K$-orbits in the following sense.
\begin{prop}
\label{smooth approximation}
There exists $\theta>0$ such that for every $\varepsilon>0$ and $t\geq -\frac{1}{\theta}\log \varepsilon$,
$$\int_{\mathcal{Y}} |\hat{f_{\varepsilon}} \circ a_t - \hat{\chi}\circ a_t| d\mu_{\mathcal{Y}} \ll \varepsilon
$$
\end{prop}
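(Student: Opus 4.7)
The plan is to reduce the non-smooth Siegel transform $\hat{\chi}$ to one of a smooth auxiliary function. Since $\chi \leq f_\varepsilon$ and both are non-negative, $f_\varepsilon - \chi \geq 0$, hence $\hat{f}_\varepsilon \circ a_t - \hat{\chi}\circ a_t = \widehat{f_\varepsilon-\chi}\circ a_t \geq 0$ and the absolute value drops. The central difficulty is that $\chi$, and therefore $f_\varepsilon-\chi$, is not smooth, so Proposition \ref{double equidistribution K orbits} does not apply directly. I would dominate $f_\varepsilon-\chi$ pointwise by a non-negative $h_\varepsilon \in C_c^{\infty}(\mathcal{C})$, supported in an $\varepsilon$-thickening of $\partial F_{1,c}$, with $||h_\varepsilon||_\infty \ll 1$, $||h_\varepsilon||_{L^1_{\mathcal{C}}} \ll \varepsilon$ and $||h_\varepsilon||_{C^l} \ll \varepsilon^{-l}$. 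Such an $h_\varepsilon$ is produced by the same mollification that yields (\ref{epsilon approximation of chi})---concretely, one may take $h_\varepsilon = f_\varepsilon - \psi_\varepsilon$ with $\psi_\varepsilon \leq \chi$ a smooth inner approximation of comparable quality. Then $\widehat{f_\varepsilon - \chi} \leq \hat{h}_\varepsilon$, so it suffices to bound $\int_{\mathcal{Y}} \hat{h}_\varepsilon \circ a_t\, d\mu_{\mathcal{Y}}$.

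Next I would split $\hat{h}_\varepsilon = \hat{h}_\varepsilon^{(L)} + (\hat{h}_\varepsilon - \hat{h}_\varepsilon^{(L)})$ via the truncated Siegel transform. Proposition \ref{bounds truncated siegel transform} (with $p=1$ and any $\tau<n$) handles the tail:
$$||(\hat{h}_\varepsilon - \hat{h}_\varepsilon^{(L)})\circ a_t||_{L^1_{\mathcal{Y}}} \ll_\tau L^{-\tau/2}, \qquad t \geq \kappa \log L.$$
Since $h_\varepsilon$ is smooth, estimate (\ref{bound for Cl norm truncated}) gives $\hat{h}_\varepsilon^{(L)} \in C_c^{\infty}(\mathcal{X})$ with $||\hat{h}_\varepsilon^{(L)}||_{C^l} \ll L\,\varepsilon^{-l}$, so effective double equidistribution of translated $K$-orbits (Proposition \ref{double equidistribution K orbits}) yields
$$\int_{\mathcal{Y}} \hat{h}_\varepsilon^{(L)} \circ a_t\, d\mu_{\mathcal{Y}} = \int_{\mathcal{X}} \hat{h}_\varepsilon^{(L)}\, d\mu_{\mathcal{X}} + O\bigl(e^{-\gamma t} L\, \varepsilon^{-l}\bigr).$$
Because $0 \leq \eta_L \leq 1$ and $\hat{h}_\varepsilon \geq 0$, the Siegel mean value theorem (Proposition \ref{siegel mean value thrm}) bounds the main term by $\int_{\mathcal{C}} h_\varepsilon\, dx \ll \varepsilon$.

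Collecting the three contributions, $\int_{\mathcal{Y}} |\hat{f}_\varepsilon - \hat{\chi}|\circ a_t\, d\mu_{\mathcal{Y}} \ll \varepsilon + L^{-\tau/2} + e^{-\gamma t} L\, \varepsilon^{-l}$. Taking $L := \varepsilon^{-2/\tau}$ absorbs the tail into $O(\varepsilon)$; the equidistribution error is $\ll \varepsilon$ precisely when $t \geq \gamma^{-1}(1 + l + 2/\tau)\log(1/\varepsilon)$, and this dominates the non-divergence threshold $t \geq \kappa \log L = (2\kappa/\tau)\log(1/\varepsilon)$ after possibly shrinking the rate. Setting $\theta$ to be the minimum of these two reciprocal rates yields the proposition. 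I expect the principal obstacle to be precisely the first step: the discontinuity of $\chi$ forces the introduction of the smooth envelope $h_\varepsilon$, because only for a function in $C_c^{\infty}(\mathcal{X})$ does the $C^l$-bound (\ref{bound for Cl norm truncated}) become available and allow the $L$-versus-$t$ balance to produce a power-saving rate rather than a soft $o(1)$.
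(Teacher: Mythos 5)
Your proof is correct, but it takes a genuinely different route from the one in the paper, so let me compare the two.

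The paper's proof is a direct geometric computation. It dominates $f_\varepsilon - \chi$ (equivalently $\chi_\varepsilon - \chi$) by characteristic functions $\chi^{(i)}_\varepsilon$ of three thin boundary shells of $F_{1,c}$, writes each lattice point $z \in \Lambda_0$ as $z = z_{n+2}k_z v_0$, and uses $K$-invariance to reduce $\int_K \chi^{(i)}_\varepsilon(a_t k z)\,d\mu_K(k)$ to the $\mu_K$-measure of a small spherical cap or annulus around $v_0$. The constraints force $z_{n+2}$ into a narrow window around $e^t/2$, and the sum over such $z$ is controlled by an effective lattice-point count on the light cone, $|\{z \in \Lambda_0 : 0 \leq z_{n+2} < T\}| = CT^n + O(T^{n-\theta})$, yielding the bound $\ll \varepsilon + e^{-\theta t}$ directly. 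No smoothness, truncation, or equidistribution enters; the rate $\theta$ comes from the power saving in the light-cone lattice count.

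You instead dominate $f_\varepsilon - \chi$ by a smooth shell function $h_\varepsilon$ and run the full Siegel-transform machinery already developed in the paper: truncation via $\eta_L$ and estimate (3.4) of Proposition \ref{bounds truncated siegel transform} for the tail, effective equidistribution (Proposition \ref{double equidistribution K orbits}, used with one test function as the paper itself does in Proposition \ref{non-espace of mass}) plus the $C^l$-bound (\ref{bound for Cl norm truncated}) for the main term, and the Siegel mean value theorem (Proposition \ref{siegel mean value thrm}) to identify the main term as $\ll \varepsilon$. The parameter balance $L = \varepsilon^{-2/\tau}$ and the resulting threshold $\theta = \min(\gamma/(1+l+2/\tau),\, \tau/(2\kappa))$ are sound. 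Your argument is more modular and reuses the surrounding infrastructure, avoiding the external input of the effective lattice-point count on the cone; the trade-off is that your $\theta$ is tied to the equidistribution rate $\gamma$, the derivative degree $l$, and $\kappa$, whereas the paper's $\theta$ is purely arithmetic. Both routes establish the proposition. One last remark: your closing sentence identifies the discontinuity of $\chi$ as ``the principal obstacle,'' but that is an obstacle only to your strategy; the paper's direct computation never needs smoothness of the test function, since it never invokes equidistribution in this step.
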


\begin{proof}
We first observe that there exists $c_{\varepsilon}>c$ such that $c_{\varepsilon}=c+ O(\varepsilon)$ and $f_{\varepsilon}\leq \chi_{\varepsilon}$, where $\chi_{\varepsilon}$ denotes the characteristic function of the set
$$
\left\lbrace x \in \mathcal{C} ~:~  1-\varepsilon \leq x_{n+2}+x_{n+1}\leq e+\varepsilon ,~   x_{n+2}^2-x_{n+1}^2 < c_{\varepsilon}^2    \right\rbrace.
$$
The difference $\chi_{\varepsilon}-\chi$ is bounded by the sum $\chi^{(1)}_{\varepsilon}+\chi^{(2)}_{\varepsilon}+\chi^{(3)}_{\varepsilon}$ of the characteristic functions of the sets
\begin{align*}
&\left\lbrace x \in \mathcal{C} ~:~ 1-\varepsilon \leq x_{n+2}+x_{n+1}\leq 1 ,~   x_{n+2}^2-x_{n+1}^2 < c_{\varepsilon}^2    \right\rbrace,\\
&\left\lbrace x \in \mathcal{C} ~:~ e \leq x_{n+2}+x_{n+1}\leq e+\varepsilon,~   x_{n+2}^2-x_{n+1}^2 < c_{\varepsilon}^2    \right\rbrace,\\
&\left\lbrace x \in \mathcal{C} ~:~ 1 \leq x_{n+2}+x_{n+1}\leq e ,~   c^2<x_{n+2}^2-x_{n+1}^2 < c_{\varepsilon}^2    \right\rbrace.
\end{align*} 
Since $0\leq \chi \leq f_{\varepsilon} \leq \chi_{\varepsilon}$, it follows in particular 
$$
\hat{f_{\varepsilon}} (a_t\Lambda) - \hat{\chi}(a_t\Lambda) \leq \hat{\chi}^{(1)}_{\varepsilon}(a_t\Lambda)+\hat{\chi}^{(2)}_{\varepsilon}(a_t\Lambda)+\hat{\chi}^{(3)}_{\varepsilon}(a_t\Lambda).
$$
We first consider $\chi^{(1)}_{\varepsilon}$. For $x$ in the corresponding set, we also have
\begin{equation*}
0\leq x_{n+2}-x_{n+1} < c_\varepsilon^2/(1-\varepsilon) \quad \text{and}\quad x_1^2+ \dots +x_{n}^2 < c_\varepsilon^2.\\
\end{equation*}
We write $I_{0,\varepsilon}\coloneqq [0,c_\varepsilon] $, $I_{1,\varepsilon}\coloneqq [-c_{\varepsilon}^2/(1-\varepsilon),0]$, $I_{2,\varepsilon}\coloneqq [1-\varepsilon,1] $, $ k=(k_1, \dots , k_{n+2})^T \in K$, and compute
\begin{align*}
&\int_{\mathcal{Y}} |\hat{\chi}^{(1)}_{\varepsilon}\circ a_t| ~d\mu_{\mathcal{Y}} = \int_{K} \hat{\chi}^{(1)}_{\varepsilon}(a_tk\Lambda_0) ~d\mu_K(k) = \int_{K}\sum_{z \in \Lambda_0} \chi_{\varepsilon}^{(1)}(a_tkz) ~d\mu_K(k) \\
&=   \sum_{z \in \Lambda_0 }\int_{K} \chi^{(1)}_{\varepsilon}\left(\begin{matrix}
 \langle k_{1},z \rangle\\ \dots\\  \langle k_{n},z \rangle\\
 \langle k_{n+1},z \rangle\cosh t - z_{n+2}\sinh t\\
 \langle k_{n+1},z \rangle(-\sinh t) + z_{n+2}\cosh t 
\end{matrix}\right)~d\mu_K(k) \\
&=\sum_{z \in \Lambda_0} 
\int_K 
\chi_{I_{0,\varepsilon}}\left( ||\langle k_1,z\rangle ,\dots, \langle k_n,z \rangle || \right) 
\chi_{I_{1,\varepsilon}}\left(e^{t}\left(\langle k_{n+1},z \rangle -z_{n+2}\right) \right)
\chi_{I_{2,\varepsilon}}\left(e^{-t}\left( \langle k_{n+1},z \rangle +z_{n+2}\right) \right)
d\mu_K(k).
\end{align*}
We observe that the intersection $(e^{-t}I_{1,\varepsilon}+z_{n+2}) \cap (e^{t}I_{2,\varepsilon}-z_{n+2})$ is non-empty only if $(1-\varepsilon)e^t\leq 2z_{n+2} \leq e^t+\frac{c_\varepsilon^2}{1-\varepsilon}e^{-t}$, i.e. $z_{n+2}=e^t/2 +O(\varepsilon e^t+e^{-t})$. Moreover, writing each $z \in \Lambda_0$ as $z = z_{n+2}k_zv_0$ with some $k_z \in K$ and $v_0=(0,\dots,0,1,1)\in \mathcal{C}$, and using invariance under $k_z$, we have
\begin{align}
&\int_{\mathcal{Y}} |\hat{\chi}^{(1)}_{\varepsilon}\circ a_t| ~d\mu_{\mathcal{Y}}\nonumber\\
&\leq \sum_{\substack{z\in \Lambda_0\\ z_{n+2}=\frac{e^t}{2} +O(\varepsilon e^t+e^{-t})}} \int_K 
 \chi_{I_{0,\varepsilon}}\left( z_{n+2}||\langle k_1,v_0\rangle ,\dots, \langle k_n,v_0 \rangle || \right)
\chi_{e^{-t}I_{1,\varepsilon}}\left(z_{n+2}(\langle k_{n+1},v_0 \rangle -1) \right)\cdot \nonumber\\
&\qquad \qquad \qquad \qquad \qquad \qquad \qquad \qquad \qquad \qquad \qquad \qquad \cdot \chi_{e^{t}I_{2,\varepsilon}}\left(z_{n+2}(\langle k_{n+1},v_0 \rangle +1) \right)
d\mu_K(k) \nonumber \\
&\leq \sum_{\substack{z\in \Lambda_0\\ z_{n+2}=\frac{e^t}{2} +O(\varepsilon e^t+e^{-t})}} \int_K 
\chi_{e^{-t}\frac{2}{1-\varepsilon}I_{0,\varepsilon}}\left(||\langle k_1,v_0\rangle ,\dots, \langle k_n,v_0 \rangle || \right)
\chi_{e^{-2t}\frac{2}{1-\varepsilon} I_{1,\varepsilon}}\left(\langle k_{n+1},v_0\rangle - 1 \right) \cdot \nonumber\\
&\qquad \qquad \qquad \qquad \qquad \qquad \qquad \qquad \qquad \qquad \qquad \qquad \cdot \chi_{\frac{2}{1-\varepsilon}I_{2,\varepsilon}}\left(\langle k_{n+1},v_0 \rangle +1 \right)
d\mu_K(k) \nonumber \\
&\leq \sum_{\substack{z\in \Lambda_0\\ z_{n+2}=\frac{e^t}{2} +O(\varepsilon e^t+e^{-t})}} \mu_K \left( \left\lbrace k \in K ~ : \begin{matrix}  |k_{i,n+1}| \ll e^{-t}, ~ i=1,\dots,n  , \\
|k_{n+1,n+1}-1| \ll \min(e^{-2t},\varepsilon).
\end{matrix} \right\rbrace \right)\nonumber \\
&\leq \sum_{\substack{z\in \Lambda_0\\ z_{n+2}=\frac{e^t}{2} +O(\varepsilon e^t+e^{-t})}} \mu_K \left( \left\lbrace k \in K ~ : ||kv_0-v_0 ||\ll e^{-t} \right\rbrace \right)\nonumber \\
&\ll \sum_{\substack{z\in \Lambda_0\\ z_{n+2}=\frac{e^t}{2} +O(\varepsilon e^t+e^{-t})}} \mu_{\text{S}^n} \left( \left\lbrace v \in \text{S}^n ~ :  ||v-v_0 ||\ll e^{-t} \right\rbrace \right)\nonumber \\
&\ll \sum_{\substack{z\in \Lambda_0\\ z_{n+2}=\frac{e^t}{2} +O(\varepsilon e^t+e^{-t})}} e^{-nt}.\nonumber 
\end{align}
We use further that there exist positive constants $C$ and $\theta$ such that, for all $n\geq 2$, we have
\begin{equation*}
|\{z \in \mathcal{C}\cap \mathbb{Z}^{n+2}~: 0\leq z_{n+2}<T \}| = CT^{n}+O(T^{n-\theta}), 
\end{equation*}
hence 
\begin{equation*}
|\{z \in \mathcal{C}\cap \mathbb{Z}^{n+2}~: (1-\varepsilon)e^t \leq 2z_{n+2}<e^t+\frac{c_\varepsilon^2}{1-\varepsilon}e^{-t} \}| \leq \varepsilon e^{nt}+O(e^{(n-\theta)t}).
\end{equation*}
It follows
\begin{equation}
\int_{\mathcal{Y}} |\hat{\chi}^{(1)}_{\varepsilon}\circ a_t| ~d\mu_{\mathcal{Y}} \ll \varepsilon + e^{-\theta t}.
\end{equation}
\\
We proceed similarly for $\chi^{(3)}_{\varepsilon}$. For $x$ in the corresponding set, we also have
\begin{equation*}
\frac{c^2}{e}\leq x_{n+2}-x_{n+1} < c_\varepsilon^2 \quad \text{and}\quad c^2<x_1^2+ \dots +x_{n}^2 < c_\varepsilon^2.\\
\end{equation*}
We write $I'_{0,\varepsilon}\coloneqq [c,c_\varepsilon] $, $I'_{1,\varepsilon}\coloneqq [-c_{\varepsilon}^2,-c^2/e]$, $I'_{2}\coloneqq [1,e] $ and compute similarly
\begin{align*}
&\int_{\mathcal{Y}} |\hat{\chi}^{(3)}_{\varepsilon}\circ a_t| ~d\mu_{\mathcal{Y}}  = \int_{K}\sum_{z \in \Lambda_0} \chi^{(3)}_{\varepsilon}(a_tkz) ~d\mu_K(k) \\
&=\sum_{z \in \Lambda_0} 
\int_K 
\chi_{I'_{0,\varepsilon}}\left( ||\langle k_1,z\rangle ,\dots, \langle k_n,z \rangle || \right) 
\chi_{I'_{1,\varepsilon}}\left(e^{t}\left(\langle k_{n+1},z \rangle -z_{n+2}\right) \right)
\chi_{I'_{2,\varepsilon}}\left(e^{-t}\left( \langle k_{n+1},z \rangle +z_{n+2}\right) \right)
d\mu_K(k).
\end{align*}
We observe again that the intersection $(e^{-t}I'_{1,\varepsilon}+z_{n+2}) \cap (e^{t}I'_{2}-z_{n+2})$ is non-empty only if $C_1e^t \leq z_{n+2} \leq C_2e^t$ for some positive constants $C_1$ and $C_2$. Moreover, writing each $z \in \Lambda_0$ as $z = z_{n+2}k_zv_0$ with some $k_z \in K$ and $v_0=(0,\dots,0,1,1)\in \mathcal{C}$, and using invariance under $k_z$, we have
\begin{align}
&\int_{\mathcal{Y}} |\hat{\chi}^{(3)}_{\varepsilon}\circ a_t| ~d\mu_{\mathcal{Y}}\nonumber\\
&\leq \sum_{\substack{z\in \Lambda_0\\ z_{n+2} \asymp e^t}} \int_K 
 \chi_{I'_{0,\varepsilon}}\left( z_{n+2}||\langle k_1,v_0\rangle ,\dots, \langle k_n,v_0 \rangle || \right)
\chi_{e^{-t}I'_{1,\varepsilon}}\left(z_{n+2}(\langle k_{n+1},v_0 \rangle -1) \right)\cdot \nonumber\\
&\qquad \qquad \qquad \qquad \qquad \qquad \qquad \qquad \qquad \qquad \qquad \qquad \cdot \chi_{e^{t}I'_{2}}\left(z_{n+2}(\langle k_{n+1},v_0 \rangle +1) \right)
d\mu_K(k) \nonumber \\
&\leq \sum_{\substack{z\in \Lambda_0\\ z_{n+2} \asymp e^t}} \int_K 
\chi_{e^{-t}\frac{1}{C_1}I'_{0,\varepsilon}}\left(||\langle k_1,v_0\rangle ,\dots, \langle k_n,v_0 \rangle || \right)
\chi_{e^{-2t}\frac{1}{C_1}I'_{1,\varepsilon}}\left(\langle k_{n+1},v_0\rangle - 1 \right) \cdot \nonumber\\
&\qquad \qquad \qquad \qquad \qquad \qquad \qquad \qquad \qquad \qquad \qquad \qquad \cdot \chi_{\frac{1}{C_1}I'_{2}}\left(\langle k_{n+1},v_0 \rangle +1 \right)
d\mu_K(k) \nonumber \\
%&\leq \sum_{\substack{z\in \Lambda_0\\ z_{n+2} \asymp e^t}} \mu_K \left( \left\lbrace k \in K ~ : \begin{matrix}  |k_{i,n+1}-1| \ll \varepsilon e^{-t}, ~ i=1,\dots,n  , \\
%|k_{n+1,n+1}-1| \ll e^{-2t}.
%\end{matrix} \right\rbrace \right)\nonumber \\
&\leq \sum_{\substack{z\in \Lambda_0\\ z_{n+2} \asymp e^t}} \mu_K \left( \left\lbrace k \in K ~ : ||kv_0-v_0 ||\ll \varepsilon
e^{-t} \right\rbrace \right)\nonumber \\
&\ll \sum_{\substack{z\in \Lambda_0\\ z_{n+2} \asymp e^t}} \mu_{\text{S}^n} \left( \left\lbrace v \in \text{S}^n ~ :  ||v-v_0 ||\ll \varepsilon e^{-t} \right\rbrace \right)\nonumber \\
&\ll \sum_{\substack{z\in \Lambda_0\\ z_{n+2} \asymp e^t}} \varepsilon^n e^{-nt}. \nonumber 
\end{align}
We use again the estimate
\begin{equation*}
|\{z \in \mathcal{C}\cap \mathbb{Z}^{n+2}~: z_{n+2}\asymp e^{t} \}|= O(e^{nt}), 
\end{equation*}
hence 
\begin{equation*}
\int_{\mathcal{Y}} |\hat{\chi}^{(3)}_{\varepsilon}\circ a_t| ~d\mu_{\mathcal{Y}} ~\ll~ \varepsilon.
\end{equation*}
The bound for $||\hat{\chi}^{(2)}_{\varepsilon}\circ a_t ||_{L^1_\mathcal{Y}}$ is obtained similarly as for $\chi^{(1)}_{\varepsilon}$.\\
Altogether we obtain, for all $t\geq-\frac{1}{\theta} \log \varepsilon$,
$$||\hat{f}_\varepsilon \circ a_t -\hat{\chi}\circ a_t ||_{L^1_\mathcal{Y}} \ll \varepsilon.
$$
\end{proof}

\section{Effective equidistribution}

	\subsection{Effective double equidistribution of translated $K$-orbits}
	
In this section we prove an effective equidistribution of $K$-orbits by relating it to effective equidistribution of unstable horospherical orbits established in a more general setting in \cite{bjoerklund2021equidistribution}. We recall the notations
\begin{align*}
G &= \text{SO}(Q)^\circ \cong \text{SO}(n+1,1)^\circ,\\
K &= \begin{pmatrix}
\text{SO}(n+1) &  \\
 & 1
\end{pmatrix},\\
a_t &=  \begin{pmatrix}
I_n & & \\
 &  \cosh t & -\sinh t \\
 & -\sinh t & \cosh t \end{pmatrix} ~ \in G, \quad \text{and }~ A = \left\lbrace  a_t : t \in \mathbb{R}\right\rbrace.
\end{align*}

We also consider the corresponding horospherical subgroups
\begin{align*}
&U = \left\lbrace  g\in G ~ : ~ a_{-t}ga_{t}\rightarrow e \text{ as }t\rightarrow \infty \right\rbrace,\\
&U^- = \left\lbrace  g\in G ~ : ~ a_{t}ga_{-t}\rightarrow e \text{ as }t\rightarrow \infty \right\rbrace,\\
&H = \left\lbrace  g\in G ~ : ~ a_{t}g=ga_{t}\right\rbrace,
\end{align*}
and the probability measures $d\mu_K$, $dt$ and $d\mu_U$ on $K$, $A$ and $U$ respectively.

We denote $B_r^K$ the ball of radius $r>0$ centered at the identity in $K$.
\begin{thm}[specializes Theorem 1.1. in \cite{bjoerklund2021equidistribution}]
\label{double equidistribution U orbits}
There exists $\delta>0$ and $l\geq 1$ such that, for every $f \in C_{c}^{\infty}(U)$ and $\varphi, \psi \in C_{c}^{\infty}(\mathcal{X})$ and every compact subset $L \subset \mathcal{X}$, there exists $C>0$ such that for every $\Lambda \in L$ and $t_1, t_2 \geq 0$, one has
$$
\left|  \int_{U} f(u)\varphi (a_{t_1} u\Lambda) \psi (a_{t_2} u\Lambda)d\mu_U(u) - \int_{U}f \int_{\mathcal{X}} \varphi \int_{\mathcal{X}} \psi \right| \leq C e^{-\delta \min (t_1,t_2, |t_1-t_2|)}||f||_l||\varphi||_l||\psi ||_l~.
$$ 
\end{thm}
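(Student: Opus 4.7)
The plan is to obtain this statement as a direct specialization of Theorem 1.1 in \cite{bjoerklund2021equidistribution}, which establishes effective double equidistribution of unstable horospherical orbits on quotients $G/\Gamma$ for any semisimple Lie group $G$ possessing a spectral gap, any $\mathbb{R}$-diagonalizable one-parameter subgroup $A$, and its associated expanding horospherical subgroup $U$. The work is thus entirely in verifying the hypotheses of that general theorem in our setting.

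First I would check that the data $(G,\Gamma,A,U)$ defined above match the framework of \cite{bjoerklund2021equidistribution}: $G = \text{SO}(n+1,1)^\circ$ is a simple non-compact real Lie group, $\Gamma$ is a lattice in $G$ commensurable with $\text{SO}(Q)^\circ_{\mathbb{Z}}$, $\{a_t\}$ is $\mathbb{R}$-diagonalizable and $U$ is by definition its expanding horospherical subgroup. The representation of $G$ on $L^2_0(\mathcal{X})$ has a uniform spectral gap: by Howe--Moore it is tempered outside the complementary series, and the congruence nature of $\Gamma$ rules out arbitrarily small eigenvalues. All of these are standard and place us squarely within the scope of their result.

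Second, to describe the mechanism briefly: the proof in \cite{bjoerklund2021equidistribution} proceeds by exponential mixing of the $A$-action together with a thickening argument. One thickens the integral over $U$ to an integral over a small product neighborhood in $G$ in the $AU^-$ transversal directions, using Sobolev smoothing of $\varphi$ and $\psi$ to control the error by $\|\varphi\|_l \|\psi\|_l$. Writing $a_{t_2}u\Lambda = a_{t_2-t_1}(a_{t_1}u\Lambda)$ and changing variables converts the double integral, up to a smoothing error, into a matrix coefficient of the form $\langle a_{t_2-t_1}\Phi,\Psi\rangle_{L^2(\mathcal{X})}$ against functions $\Phi,\Psi$ built from $\varphi,\psi,f$. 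Exponential decay of matrix coefficients (from the spectral gap) yields the factor $e^{-\delta|t_1-t_2|}$, while the non-divergence cost of thickening in the contracting directions gives the factor $e^{-\delta \min(t_1,t_2)}$; together these combine into the stated bound $e^{-\delta \min(t_1,t_2,|t_1-t_2|)}$.

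The main technical obstacle, were one to reprove the theorem directly rather than invoke it, would be the Sobolev-norm bookkeeping and tracking of the constant $C$ in terms of the compact set $L$, which enters through a uniform lower bound on the injectivity radius over $L$. Since \cite{bjoerklund2021equidistribution} already carries out this analysis in greater generality than needed here, our proof reduces to the verification above followed by an application of their theorem to the parameters $(G,\Gamma,A,U)$.
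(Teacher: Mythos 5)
The paper does not give an independent proof of this statement: it is labeled as specializing Theorem 1.1 of \cite{bjoerklund2021equidistribution}, and the body of the paper simply invokes it after fixing the notation for $G$, $\Gamma$, $A$, $U$. Your proposal does exactly the same thing — verify that $(G,\Gamma,A,U)$ satisfies the hypotheses of the cited theorem (including the spectral gap for the arithmetic lattice $\Gamma$) and then apply it — so it matches the paper's approach; the additional sketch of the thickening-and-mixing mechanism behind the cited result is accurate but not something the paper itself supplies.
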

In the following proposition we prove an analogous effective double equidistribution for translated $K$-orbits.
\begin{prop}
\label{double equidistribution K orbits}
There exists $\gamma>0$ and $l\geq 1$ such that, for every $f \in C_{c}^{\infty}(K)$ and $\varphi, \psi \in C_{c}^{\infty}(\mathcal{X})$ and every compact subset $L \subset \mathcal{X}$, there exists $C>0$ such that for every $\Lambda \in L$ and $t_1, t_2 \geq 0$, we have
$$
\left|  I_{\Lambda,f,\varphi,\psi}(t_1,t_2) - \int_{K}f \int_{\mathcal{X}} \varphi \int_{\mathcal{X}} \psi \right| \leq C e^{-\gamma \min (t_1,t_2, |t_1-t_2|)}||f||_l||\varphi||_l||\psi ||_l~,
$$ 
where $I_{\Lambda,f,\varphi,\psi}(t_1,t_2) \coloneqq \int_{K} f(k)\varphi (a_{t_1} k\Lambda) \psi (a_{t_2} k\Lambda)d\mu_K(k)$.
\end{prop}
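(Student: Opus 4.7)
The plan is to reduce the statement for $K$-orbits to the corresponding one for $U$-orbits (Theorem \ref{double equidistribution U orbits}) via a local change of variables that exploits the Bruhat-type decomposition of $G$ and the centralizer relation $H \subset Z_G(A)$. The numerology works out: in $G = \mathrm{SO}(n+1,1)^\circ$ one has $\dim K = \tfrac{n(n+1)}{2} = \dim U + \dim H$, where $H \cong \mathrm{SO}(n)$ is the centralizer of $A$ in $K$, so locally $K$ should be parametrizable by $U \times H$ (modulo an $AU^-$-correction that will be suppressed by the $a_t$-action).

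First, using a smooth partition of unity on $K$ and the fact that $L$ is compact, I would reduce to the case where $f$ is supported in a small neighborhood $W$ of some $k_0 \in K$, and then, by replacing $\Lambda$ by $k_0\Lambda$ (the compactness of $L$ makes this uniform), reduce further to $k_0 = e$. On $W$, construct a smooth map $\Psi : V \times H \to K$, where $V$ is a small neighborhood of the identity in $U$, defined implicitly by the Bruhat-type factorization $uh = \Psi(u,h)\cdot a_{s(u,h)}\cdot u^{-}(u,h)$ with $s: V\times H \to \mathbb{R}$, $u^{-}: V \times H \to U^-$ smooth and vanishing at $(e,e)$. Under this substitution, the $K$-integral becomes
\[
\int_{H}\int_{V} f(\Psi(u,h))\,\varphi\bigl(a_{t_1}\Psi(u,h)\Lambda\bigr)\,\psi\bigl(a_{t_2}\Psi(u,h)\Lambda\bigr)\,J(u,h)\,d\mu_U(u)\,d\mu_H(h),
\]
where $J$ is the smooth Jacobian of $\Psi$.

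Second, I would use that $H$ centralizes $A$ and that $a_t$ contracts $U^-$ exponentially to rewrite $a_t \Psi(u,h)$. Explicitly, from $\Psi(u,h) = uh\cdot (u^{-})^{-1}a_{-s}$ and $ha_t = a_t h$, one gets
\[
a_t \Psi(u,h) = \bigl(a_t u\bigr) \cdot h \cdot a_{-s(u,h)} \cdot \bigl(a_t (u^-)^{-1} a_{-t}\bigr)\cdot a_t^{-1},
\]
and the bracketed $U^-$-conjugate is $e + O(e^{-t})$ in $G$. After absorbing the exponentially small correction into the sup-norm error (controlled by $\|\varphi\|_{C^1}\|\psi\|_{C^1}$), each factor $\varphi(a_{t_i}\Psi(u,h)\Lambda)$ is replaced by $\varphi\bigl(h\cdot a_{t_i - s(u,h)}u\Lambda\bigr)$ (using invariance of the measure under left $H$-action, noting that the $H$-translation moves $\varphi$ to $\varphi_h := \varphi(h\,\cdot\,)$, still smooth with the same $C^l$-norms). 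Since $s(u,h)$ is bounded on the compact support and since $H$ is compact, the integral reduces, up to a controlled error and an outer integral over $h \in H$, to
\[
\int_{V} \tilde f_h(u)\,\varphi_h\bigl(a_{t_1'} u \Lambda\bigr)\,\psi_h\bigl(a_{t_2'} u \Lambda\bigr)\,d\mu_U(u),
\]
with $t_i' = t_i - s(u,h)$ and $\tilde f_h$ smooth with $C^l$-norm controlled by that of $f$.

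Third, I would invoke Theorem \ref{double equidistribution U orbits} to the inner $U$-integral. The shift $s(u,h)$ is bounded, so $\min(t_1', t_2', |t_1'-t_2'|) \geq \min(t_1,t_2,|t_1-t_2|) - O(1)$, and the exponent $\delta$ of the $U$-orbit bound transfers (with an absolute multiplicative constant) to an exponent $\gamma$ in the $K$-orbit statement. Integrating the resulting estimate against the smooth Jacobian $J$ and over $h \in H$ yields the main term $\int_K f\int_{\mathcal{X}}\varphi\int_{\mathcal{X}}\psi$ plus an error of size $C e^{-\gamma\min(t_1,t_2,|t_1-t_2|)}\|f\|_l\|\varphi\|_l\|\psi\|_l$. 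The main technical obstacle I anticipate is keeping careful track of three sources of error — the $U^-$-correction $O(e^{-t})$, the reparametrization of $u$ and the Jacobian $J$, and the shift $s(u,h)$ inside the $A$-argument — while verifying that each is ultimately absorbed into the advertised $C^l$-bounds on the test functions; optimizing the size of the partition-of-unity pieces and the $U^-$-neighborhood against the exponential decay in $t$ will determine the final value of $\gamma$.
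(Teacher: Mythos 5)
Your overall strategy is the same as the paper's: localize on $K$ by a partition of unity, parametrize $K$ near the identity by $U \times M$ (your $H = M \cong \mathrm{SO}(n)$, the centralizer of $A$ in $K$), peel off the parts that are either contracted by $a_t$ or commute with $A$, and reduce to Theorem~\ref{double equidistribution U orbits}. However, the specific factorization you chose contains a genuine gap.

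You parametrize $K$ via the Iwasawa decomposition $G = KAU^-$ applied to $uh$, i.e.\ $\Psi(u,h) = uh(u^-)^{-1}a_{-s}$, which puts the correction $a_s u^-$ on the \emph{right} of the $K$-component. Consequently the correction sits between the expanding flow $a_{t}$ and the lattice $\Lambda$, and the flow amplifies it instead of killing it. More precisely, a direct calculation (your displayed identity for $a_t\Psi(u,h)$ is not correct: substituting $\Psi(u,h)=uh(u^-)^{-1}a_{-s}$ into $(a_tu)\,h\,a_{-s}\,(a_t(u^-)^{-1}a_{-t})\,a_t^{-1}$ and simplifying does not recover $a_tuh(u^-)^{-1}a_{-s}$) gives
\begin{equation*}
a_t\Psi(u,h)\Lambda \;=\; \bigl(a_t\,u\,(u^-_h)^{-1}(\tilde u_h)^{-1}\,a_{-t}\bigr)\cdot \bigl(h\,a_{t-s}\,u\,\Lambda\bigr),
\end{equation*}
where $u^-_h=hu^-h^{-1}$, $\tilde u_h = h(a_su a_{-s})h^{-1}$; the inner product $u(u^-_h)^{-1}(\tilde u_h)^{-1}$ has a $\mathrm{Lie}(U)$-component of size $O(r^2)$, and conjugation by $a_t$ blows it up to $O(r^2e^{2t})$. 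So the prefactor is \emph{not} close to the identity. To make it $O(r)$ one needs $r \lesssim e^{-t}$, but the optimization against the $r^{-\lambda-\nu}e^{-\delta t}$ error from the partition of unity forces $r = e^{-\gamma t}$ with $\gamma < \delta/(\lambda+\nu) < 1$, and these two demands are incompatible. There is also a subsidiary issue: your time shift $t_i' = t_i - s(u,h)$ depends on the integration variable $u$, so Theorem~\ref{double equidistribution U orbits} does not apply directly; that one is however easy to repair since $a_{-s}$ can be moved to the far left and absorbed by Lipschitz continuity.

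The paper avoids both problems by factoring $K$ locally as $K = MS$ and then the transversal as $S \subset U^- H U$, so that $k = m_k\,u^-_{s_k}\,h_{s_k}\,u_{s_k}$ with $u_{s_k}$ on the far \emph{right}. Then
\begin{equation*}
a_t k\Lambda \;=\; m_k\,(a_tu^-_{s_k}a_{-t})\,h_{s_k}\,\cdot\, a_tu_{s_k}\Lambda,
\end{equation*}
and now all corrections sit on the \emph{left} of $a_tu_{s_k}\Lambda$: $m_k$ and $h_{s_k}$ are $O(r)$ and commute with $A$, and $a_tu^-_{s_k}a_{-t}$ is $O(re^{-2t})$. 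Lipschitz continuity of $\varphi,\psi$ then gives an honest $O(r)$ error and the $U$-orbit theorem applies cleanly. Your argument would go through if you used the opposite Iwasawa decomposition $G = U^-AK$, writing $uh = u^-\,a_s\,\Psi(u,h)$ so that $\Psi(u,h)=a_{-s}(u^-)^{-1}uh$; then $a_t\Psi(u,h)\Lambda = (a_{t-s}(u^-)^{-1}a_{-(t-s)})\,a_{t-s}uh\Lambda$, with the $U^-$-correction contracted and the small $A$-shift absorbed from the left.
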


\begin{proof}
We consider the centralizer of $A$ in $K$,  
\begin{equation*}
 M\coloneqq \text{cent}_K(A)=K\cap H = \begin{pmatrix}
\text{SO}(n) &  \\
 & I_2
\end{pmatrix} \cong \text{SO}(n),
\end{equation*}
and the submanifold $S\subset K$ defined via the exponential map by
$$
\text{Lie}(S)= \left\lbrace \begin{pmatrix}
0_n &\textbf{s}& &  \\
-\textbf{s}^T &0 &\\
& & 0
\end{pmatrix} : \textbf{s}\in \mathbb{R}^n\right\rbrace.$$
We have $\text{Lie}(K)=\text{Lie}(M)\oplus\text{Lie}(S)$ and the map $M\times S\rightarrow K$ is a diffeomorphism in a neighborhood of the identity, giving a unique decomposition $k= m(k)s(k)$ and also $d\mu_K=d\mu_M\times d\mu_S$, where we denote by $d\mu_M$ the Haar measure on $M$ and by $d\mu_S$ a smooth measure defined on a neighborhood of the identity in $S$.\\
Further, we consider the decomposition  of $G$ as the product $U^-HU$ in a neighborhood of the identity, giving a unique decomposition $s=u^-(s)h(s)u(s)$. We verify that the coordinate map $S\rightarrow U$, $s\mapsto u(s)$ is a diffeomorphism in a neighborhood of the identity. We first observe that
%\vspace{-5pt}
\begin{equation*}
\text{dim}(S)=\text{dim}(K)-\text{dim}(M)=\frac{(n+1)n}{2}-\frac{(n-1)n}{2}=n=\text{dim}(U).
\end{equation*}
Moreover, for the product map $p:U^-\times H \times U \rightarrow G,~ (u^-,h,u)\mapsto u^-hu$, the derivative at the identity is given by $D(p)_e(x,y,z)=x+y+z$, for all $(x,y,z)\in \text{Lie}(U^-) \times\text{Lie}(H) \times\text{Lie}(U)$. Hence, for all $w \in \text{Lie}(G)$, the $U$-component of $D(p)_e^{-1}(w)$ is zero if and only if $w \in \text{Lie}(U^-)+\text{Lie}(H)$. Since $\text{Lie}(S) \cap \left(\text{Lie}(U^-)+\text{Lie}(H)\right)= 0$, the derivative of $s\mapsto u(s)$ is injective.\\
We localize the problem to a neighborhood of the identity by considering the partition of unity $1= \sum_{i=1}^{N} \phi_i(kk_i^{-1})$ for some $k_i \in$ supp$(f)$ and all $k \in \text{supp}(f)$, with non-negative functions $\phi_i \in C_c^{\infty}(K)$ such that supp$(\phi_i) \subseteq B_r^K$, $|| \phi_i ||_l \ll r^{-\nu}$ and $N \ll r^{-\lambda}$, for some $\nu$, $\lambda >0$, and for $r>0$ small enough to be fixed later.\\
We write for simplicity $k=m_ks_k=m_ku_{s_k}^-h_{s_k}u_{s_k}$, the unique decompositions of $k$ and $s$ in a neighborhood of the identity in $K$ and $S$. We also write $f_i(k)\coloneqq f(kk_i)$ and $\Lambda_i \coloneqq k_i\Lambda$. We compute
\begin{align*}
&I_{\Lambda,f,\varphi,\psi}(t_1,t_2) = \sum_{i=1}^N \int_K \phi_i(k) f(kk_i)\varphi(a_{t_1}kk_i\Lambda)\psi(a_{t_2}kk_i\Lambda)d\mu_K(k)\\
&=\sum_{i=1}^N \int_K \phi_i(k) f_i(k)\varphi(m_ka_{t_1}u^-_{s_k}a_{-t_1}h_{s_k}a_{t_1}u_{s_k}\Lambda_i)\psi(m_ka_{t_2}u^-_{s_k}a_{-t_2}h_{s_k}a_{t_2}u_{s_k}\Lambda_i)d\mu_K(k).
\end{align*}
By Lipschitz continuity of the coordinate maps $m_k$, $u^{-}_{s_k}$ and $h_{s_k}$ on $B_r^K$ with $r$ small enough, there exists a constant $C_1>0$ such that for all $k \in B_r^K$, we have 
$$ a_tu^-_{s_k}a_{-t} \in B_{C_1re^{-2t}}^K~ ~~~ \text{and}~~~ m_k,h_{s_k} \in B_{C_1r}^K.$$
By Lipschitz continuity of $\varphi$ and $\phi$, it follows
$$
\left|~I_{\Lambda,f,\varphi,\psi}(t_1,t_2) - \sum_{i=1}^N \int_K \phi_i(k) f_i(k)\varphi(a_{t_1}u_{s_k}\Lambda_i)\psi(a_{t_2}u_{s_k}\Lambda_i)d\mu_K(k) ~\right|~\ll_l~ r ||f ||_l||\varphi ||_l ||\psi  ||_l.
$$
We use now the decomposition $d\mu_K=d\mu_M\times d\mu_S$ and apply the change of variable $u \mapsto s(u)=s_{u}$, with a density $\rho$ defined in a neighborhood of the identity in $U$ by
\begin{equation*}
\int_S \Phi(s)d\mu_S(s)=\int_U\Phi(s(u))\rho(u)d\mu_U(u) \quad \text{for all } \Phi \in C_c(S) \text{ with supp}(\Phi)\subset B_r^S. 
\end{equation*}
We have
\begin{align}
&\sum_{i=1}^N \int_K \phi_i(k) f_i(k)\varphi(a_{t_1}u_{s_k}\Lambda_i)\psi(a_{t_2}u_{s_k}\Lambda_i)d\mu_K(k) \nonumber \\
&=~\sum_{i=1}^N \int_{M\times S} \phi_i(ms) f_i(ms)\varphi(a_{t_1}u_{s}\Lambda_i)\psi(a_{t_2}u_{s}\Lambda_i)d\mu_S(s)d\mu_M(m) \nonumber\\
&= ~\int_M\left(\sum_{i=1}^N \int_U \phi_i(ms_u) f_i(ms_u)\varphi(a_{t_1}u\Lambda_i)\psi(a_{t_2}u\Lambda_i)\rho(u) d\mu_U(u)\right)d\mu_M(m). \label{integral}
\end{align}
Using Theorem \ref{double equidistribution U orbits} with the function $f_{m,i}(u) \coloneqq \phi_i(ms_u)\rho(u)f_i(mu)$ and observing that $||f_{m,i} ||_l \ll ||\phi_i||_l||\rho||_l ||f_i||_l$ and that $||\rho_{|B_r^U}||_l\ll 1 $, it follows that the integral (\ref{integral}) is equal to
\begin{align*}
&\int_M\sum_{i=1}^N \Biggl(\int_U \phi_i(ms_u) f_i(ms_u)\rho(u)d\mu_U(u) \int_{\mathcal{X}}\varphi\int_{\mathcal{X}}\psi \Biggl.\\
& \quad\quad\quad\quad\quad\quad\quad\quad\quad\quad\quad\quad\quad\quad\quad\quad\quad\quad+ \Biggr. O\left( e^{-\delta \min (t_1,t_2, |t_1-t_2|)}||\phi_i||_l||f||_l||\varphi||_l||\psi ||_l\right)\Biggr)d\mu_M(m)\\
&= \int_M\left(\sum_{i=1}^N \int_S \phi_i(ms)f_i(ms)d\mu_S(s)\right)d\mu_M(m) \int_{\mathcal{X}}\varphi\int_{\mathcal{X}}\psi  \\
& \quad\quad\quad\quad\quad\quad\quad\quad\quad\quad\quad\quad\quad\quad\quad\quad\quad\quad+  O\left( Ne^{-\delta \min (t_1,t_2, |t_1-t_2|)}||\phi_i||_l||f||_l||\varphi||_l||\psi ||_l\right)\\
&= \int_K f \int_{\mathcal{X}}\varphi\int_{\mathcal{X}}\psi  + O\left( r^{-\lambda} e^{-\delta \min (t_1,t_2, |t_1-t_2|)}r^{-\nu}||f||_l||\varphi||_l||\psi ||_l\right),
\end{align*}
hence
\begin{align*}
I_{\Lambda,f,\varphi,\psi}(t_1,t_2) =\int_K f \int_{\mathcal{X}}\varphi\int_{\mathcal{X}}\psi  + O\left( \left( r^{-\lambda-\nu} e^{-\delta \min (t_1,t_2, |t_1-t_2|)}+r\right)||f||_l||\varphi||_l||\psi ||_l\right).
\end{align*} 
We take $r=e^{-\gamma \min (t_1,t_2, |t_1-t_2|)}$ with $\gamma = \frac{\delta}{1+\lambda+\nu}$, which yields the claim. 
\end{proof}

	\subsection{Effective pointwise equidistribution along $A$-orbits}

In this subsection we use the quantitative double equidistribution of translated $K$-orbits established in Proposition \ref{double equidistribution K orbits} and previous estimates to derive a quantitative pointwise equidistribution along $\lbrace a_t \rbrace$-orbits of the averages $\frac{1}{N}\sum_{t=0}^{N-1}\hat{\chi}\circ a_t$. The approach is presented in \cite{kleinbock2017pointwise} and works in a more general context to derive an almost-everywhere bound from an L$^p$-bound, $p>1$, using Borel-Cantelli Lemma. We follow the same approach in Lemmas \ref{dyadic estimate}, \ref{lemma 2} and Proposition \ref{pointwise equidistribution} below. 

\begin{prop}
\label{pointwise equidistribution}
Let $(Y,\nu)$ be a probability space, and let $F: Y \times \mathbb{N} \rightarrow \mathbb{R}$ be a measurable function. Suppose there exist constants $ p> 1$ and $C>0$ such that for any integers $0\leq a<b$,
$$
\int_{Y}\left|  \sum_{t=a}^{b-1}F(y,t) \right|^{p}d\nu(y) \leq C (b-a)~.
$$ 
Then, for every $\varepsilon >0$ we have
$$
\sum_{t=0}^{N-1} F(y,t) =O\left(
N^{\frac{1}{p}} \log ^{1+\frac{1}{p}+ \varepsilon} N\right),
$$
for $\nu$-almost every $y \in Y$.
\end{prop}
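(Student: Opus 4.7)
The plan is to derive a Rademacher--Menshov type maximal inequality from the $L^p$-hypothesis and then apply Chebyshev together with Borel--Cantelli. For any integer $M \geq 1$ and any $n \in [0, 2^M)$, the binary expansion of $n$ partitions $[0,n)$ into at most $M$ disjoint dyadic blocks $I_{k,j} \coloneqq [j\cdot 2^k, (j+1)\cdot 2^k)$ with $0\leq k < M$ and $0 \leq j < 2^{M-k}$. Setting $X_{k,j}(y) \coloneqq \sum_{t \in I_{k,j}} F(y,t)$ and $\mathsf{S}^{\ast}_M(y) \coloneqq \max_{0\leq n < 2^M}\bigl|\sum_{t=0}^{n-1} F(y,t)\bigr|$, the triangle inequality yields
\[
\mathsf{S}^{\ast}_M(y) ~\leq~ \sum_{k=0}^{M-1} \max_{0 \leq j < 2^{M-k}} |X_{k,j}(y)|.
\]

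The key step is to upgrade this to an $L^p$-maximal inequality. The hypothesis gives $\int_Y |X_{k,j}|^p\, d\nu \leq C \cdot 2^k$, so the crude union bound $\max_j |X_{k,j}|^p \leq \sum_j |X_{k,j}|^p$ yields $\int_Y \max_j |X_{k,j}|^p\, d\nu \leq 2^{M-k}\cdot C\cdot 2^k = C\cdot 2^M$ at each scale $k$. Combining with H\"older's inequality applied to the sum of $M$ dyadic maxima produces
\[
\int_Y |\mathsf{S}^{\ast}_M|^p\, d\nu ~\leq~ M^{p-1}\sum_{k=0}^{M-1}\int_Y \max_j |X_{k,j}|^p\, d\nu ~\leq~ C\cdot M^p\cdot 2^M.
\]

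With this maximal inequality in hand, I would set $\lambda_M \coloneqq 2^{M/p}\cdot M^{1+1/p+\varepsilon}$ and apply Chebyshev's inequality to obtain
\[
\nu\!\left(\{y \in Y : \mathsf{S}^{\ast}_M(y) > \lambda_M\}\right) ~\leq~ \frac{C\cdot M^p\cdot 2^M}{\lambda_M^p} ~=~ \frac{C}{M^{1+p\varepsilon}}.
\]
Since this is summable in $M$, the Borel--Cantelli lemma implies that for $\nu$-almost every $y$ one has $\mathsf{S}^{\ast}_M(y) \leq \lambda_M$ for all sufficiently large $M$. Given large $N$, choosing $M$ so that $N \leq 2^M < 2N$ converts this into the desired estimate $\sum_{t=0}^{N-1}F(y,t) = O\bigl(N^{1/p}\log^{1+1/p+\varepsilon}N\bigr)$; intersecting the conclusions along a countable sequence $\varepsilon_k \downarrow 0$ then establishes the claim for every $\varepsilon>0$. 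The only substantive step is the tightness of the maximal inequality: the $M^p$ factor produced by the H\"older application is precisely balanced against the Chebyshev threshold $2^{M/p}M^{1+1/p+\varepsilon}$ to yield a summable tail, and any coarser accounting would force a larger logarithmic loss than the statement allows.
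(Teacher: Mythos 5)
Your proof is correct and follows essentially the same route as the paper: dyadic decomposition, H\"older's inequality to combine the $O(\log N)$ blocks, Markov/Chebyshev, and Borel--Cantelli, with identical exponent bookkeeping. The maximal function $\mathsf{S}^*_M$ and the union bound over $j$ at each scale are a cosmetic repackaging of what the paper does in Lemmas \ref{dyadic estimate} and \ref{lemma 2}, where it instead applies Markov directly to $\sum_{I\in L_s}\bigl|\sum_{t\in I}F\bigr|^p$ and then H\"older over the at most $2s-1$ blocks in $L(N)$. The one genuine divergence is the choice of dyadic family: you use the classical binary-expansion decomposition of $[0,n)$, which for $n$ near $2^M$ contains the single long block $[0,2^{M-1})$, whereas the paper deliberately replaces that block by the telescoping first-type blocks $[2^i,2^{i+1})$ so that every interval in $L_s$ (apart from finitely many fixed short ones) has a left endpoint $a$ comparable to $b-a$ rather than $a=0$. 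For the proposition as stated this distinction is immaterial, and your argument is complete. It matters, however, in the subsequent application (Theorem \ref{pointwise equidistribution counting function}): there the $L^p$ bound \eqref{L2-delta estimate} is established only for intervals whose left endpoint is at least $\kappa\log L$ and $-\tfrac{1}{\theta}\log\varepsilon$, a condition your block $[0,2^{M-1})$ violates; so when the proposition is invoked one is forced to use the modified family $L_s$, which is exactly why the paper builds it into Lemmas \ref{dyadic estimate} and \ref{lemma 2}.
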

In the following lemmas, the notations and assumptions are the same as in Proposition \ref{pointwise equidistribution}~. For non-negative integers $m, l$ we write $[m..l)\coloneqq [m,l)\cap \mathbb{N}$.

In order to satisfy in a later application the conditions $t\geq \kappa\log L$ and $t\geq -\frac{1}{\theta}\log L$ from Propositions \ref{non-espace of mass} and \ref{smooth approximation}, we need to use a different dyadic decomposition than the one used in the classical argument. For an integer $s\geq 2$ we consider the following set of dyadic subsets,
\begin{equation*}
L_s\coloneqq \left\lbrace \right[2^i..2^{i+1}\left)~:~ 0\leq i\leq s-2 \right\rbrace \cup \left\lbrace \right[2^ij..2^{i}(j+1)\left)~:~ 2^ij\geq 2^{s-1}, 2^i(j+1)\leq 2^s \right\rbrace \cup \left\lbrace [0..1) \right\rbrace,
\end{equation*}
where the sets of first type $[2^i..2^{i+1})$, $0\leq i\leq s-2$, together with $[0..1)$, are a decompostion of the set $[0..2^{s-1})$ from the classical Schmidt decomposition.\\
For any integer $N\geq 2$ with $2^{s-1}\leq N-1 < 2^s$, the set $[0..N)$ is the disjoint union of at most $2s-1$ subsets in $L_s$ (namely $[0..1)$, the $s-1$ subsets of the first type and at most $s-1$ sets of the second type which can be constructed from the binary expansion of $N-1$). We denote by $L(N)$ this set of subsets, i.e. $[0..N)=\bigsqcup_{I\in L(N)}I$.
\begin{lem} One has
$$\sum_{I\in L_s}\int_{Y}\left|\sum_{t\in I} F(y,t) \right|^p d\nu(y) \leq Cs2^{s}. 
$$
\label{dyadic estimate}
\end{lem}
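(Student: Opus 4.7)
The plan is to apply the hypothesis directly to each interval $I \in L_s$ and then sum the resulting bounds, carefully counting how many intervals of each length appear in $L_s$.

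First I would classify the elements of $L_s$ according to their common length. The intervals of the first type $[2^i..2^{i+1})$ give one interval of length $2^i$ for each $i \in \{0,\dots,s-2\}$, and together with the singleton $[0..1)$ these form the classical Schmidt dyadic decomposition of $[0..2^{s-1})$. The intervals of the second type are the key new feature: for each fixed scale $i \in \{0,\dots,s-1\}$, the constraints $2^{s-1}\le 2^i j$ and $2^i(j+1)\le 2^s$ force $j \in \{2^{s-1-i},\dots,2^{s-i}-1\}$, producing $2^{s-1-i}$ intervals of length $2^i$, i.e.\ a complete dyadic partition of $[2^{s-1}..2^s)$ at scale $i$, repeated for every scale $i=0,1,\dots,s-1$.

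Next, applying the hypothesis with $[a,b)=I$ yields $\int_Y \left|\sum_{t \in I}F(y,t)\right|^p d\nu(y) \le C|I|$ for each $I \in L_s$. Summing the contributions from the first type together with $[0..1)$ gives $\sum_{i=0}^{s-2} C\cdot 2^i + C \le C\cdot 2^{s-1}$. The second-type contribution at scale $i$ is $2^{s-1-i}\cdot C\cdot 2^i = C\cdot 2^{s-1}$, independent of $i$, so summing over $i=0,\dots,s-1$ produces $Cs\cdot 2^{s-1}$. Adding the two contributions yields a total bounded by $C(s+1)2^{s-1}$, which is at most $Cs\,2^{s}$ after a harmless adjustment of the constant, as claimed.

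There is no substantial obstacle here: the lemma is essentially a bookkeeping exercise. The only noteworthy point is that, compared to the classical Schmidt decomposition (whose total length is $O(2^s)$), the additional dyadic partitions of $[2^{s-1}..2^s)$ at each scale $i = 0,\dots,s-1$ each contribute a constant total length $2^{s-1}$, and this is precisely what produces the extra factor $s$ on the right-hand side of the lemma, which in turn will later be absorbed into the logarithmic correction in Proposition \ref{pointwise equidistribution}.
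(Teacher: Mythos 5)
Your proof is correct and takes essentially the same approach as the paper: apply the $L^p$ hypothesis to each interval $I\in L_s$ to get a bound of $C|I|$, then sum. The only difference is that the paper simply bounds the sum over $L_s$ by the sum over \emph{all} dyadic intervals $[2^ij..2^i(j+1))$ with $2^i(j+1)\le 2^s$, yielding $Cs2^s$ directly, whereas you count the members of $L_s$ exactly and get the marginally sharper bound $C(s+1)2^{s-1}\le Cs2^s$; both yield the stated estimate.
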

\begin{proof} Since $L_s$ is a subset of the set of all dyadic sets  $[2^ij..2^i(j+1))$ where $i,j$ are non-negative integers and $2^i(j+1)\leq 2^s$, we have 
\begin{align*}
\sum_{I\in L_s}\int_{Y}\left|\sum_{t\in I} F(y,t) \right|^p d\nu(y)  &\leq \sum_{i=0}^{s-1}\sum_{j=0}^{2^{s-i}-1} \int_{Y}\left|\sum_{t\in I} F(y,t) \right|^p d\nu(y) \\
&\leq \sum_{i=0}^{s-1}\sum_{j=0}^{2^{s-i}-1} C2^i\\
&\leq Cs2^s.
\end{align*}
\end{proof}
\begin{lem}
For every $\varepsilon >0$, there exists a sequence of measurable subsets $\lbrace Y_{s}\rbrace_{s \in \mathbb{N}}$ of $Y$ such that:
\begin{enumerate}
\item $\nu(Y_s) \leq Cs^{-(1+p\varepsilon)}.$
\item For every integer $N\geq 2$ with $2^{s-1}\leq N-1<2^s$ and every $y \notin Y_s$ one has
\begin{equation}
\left| \sum_{t=0}^{N-1}F(y,t) \right| \ll 
2^{\frac{s}{p}}s^{1+\frac{1}{p}+\varepsilon}.
\end{equation}
\end{enumerate}
\label{lemma 2}
\end{lem}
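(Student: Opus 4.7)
The plan is to choose $Y_s$ so that the total $L^p$-energy of the dyadic pieces in $L_s$ is controlled off $Y_s$, and then use Hölder's inequality on the decomposition $[0..N)=\bigsqcup_{I\in L(N)}I$ to pass from a bound on the sum of $p$-th powers to a bound on $\left|\sum_{t=0}^{N-1}F(y,t)\right|$.

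Concretely, I would define
\[
Y_s \;:=\; \left\{\, y \in Y \;:\; \sum_{I\in L_s}\Bigl|\sum_{t\in I}F(y,t)\Bigr|^{p} \;>\; s^{2+p\varepsilon}\,2^{s}\,\right\}.
\]
By Markov's inequality applied to the non-negative function $y\mapsto \sum_{I\in L_s}|\sum_{t\in I}F(y,t)|^{p}$ and Lemma \ref{dyadic estimate},
\[
\nu(Y_s) \;\leq\; \frac{1}{s^{2+p\varepsilon}\,2^{s}}\sum_{I\in L_s}\int_{Y}\Bigl|\sum_{t\in I}F(y,t)\Bigr|^{p}d\nu(y) \;\leq\; \frac{C\,s\,2^{s}}{s^{2+p\varepsilon}\,2^{s}} \;=\; C\,s^{-(1+p\varepsilon)},
\]
which gives property (1).

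For property (2), fix $N\geq 2$ with $2^{s-1}\leq N-1<2^{s}$ and $y\notin Y_s$. Using the decomposition $[0..N)=\bigsqcup_{I\in L(N)}I$ with $|L(N)|\leq 2s-1$, together with Hölder's inequality with exponents $p$ and $p/(p-1)$,
\[
\Bigl|\sum_{t=0}^{N-1}F(y,t)\Bigr| \;\leq\; \sum_{I\in L(N)}\Bigl|\sum_{t\in I}F(y,t)\Bigr| \;\leq\; |L(N)|^{1-1/p}\Bigl(\sum_{I\in L(N)}\Bigl|\sum_{t\in I}F(y,t)\Bigr|^{p}\Bigr)^{1/p}.
\]
Since $L(N)\subseteq L_s$ and $y\notin Y_s$, the inner sum is bounded by $s^{2+p\varepsilon}2^{s}$, so
\[
\Bigl|\sum_{t=0}^{N-1}F(y,t)\Bigr| \;\leq\; (2s-1)^{1-1/p}\bigl(s^{2+p\varepsilon}\,2^{s}\bigr)^{1/p} \;\ll\; 2^{s/p}\,s^{\,1+1/p+\varepsilon},
\]
which is the desired bound.

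The argument is essentially routine once the right $Y_s$ is identified; the only mildly delicate point is that the enlarged decomposition $L_s$ contains on the order of $2^{s}$ sets, so a naive union bound $\sum_{I\in L_s}\nu(A_I)$ over individual Markov tail sets $A_I=\{|\sum_{t\in I}F|>\lambda_I\}$ would not give the required decay in $s$. This is circumvented by applying Markov once to the aggregate $L^p$-energy $\sum_{I\in L_s}|\sum_{t\in I}F|^{p}$ and then invoking Hölder over the much smaller index set $L(N)$ of cardinality $O(s)$; this is the step where the factor $s^{1+1/p+\varepsilon}$ arises and where one must verify that the exponents match, namely $(1-1/p)+(2+p\varepsilon)/p=1+1/p+\varepsilon$.
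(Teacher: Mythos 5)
Your proof is correct and matches the paper's argument almost line for line: same choice of $Y_s$, same application of Markov's inequality against the aggregate energy bound from Lemma \ref{dyadic estimate}, and the same Hölder step over the $O(s)$-element set $L(N)\subseteq L_s$. The only cosmetic difference is that you apply Hölder before raising to the $p$-th power while the paper raises first and then applies it; the exponent bookkeeping $(1-1/p)+(2+p\varepsilon)/p = 1+1/p+\varepsilon$ is verified correctly.
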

\begin{proof}
Consider 
\begin{equation*}
Y_{s}=
\left\lbrace y \in Y~:~ \sum_{I\in L_s} \left| \sum_{t\in I}F(y,t)\right|^p ~>~2^{s} s^{2+ p\varepsilon} \right\rbrace.
\end{equation*}
The first assertion follows from Lemma \ref{dyadic estimate} and Markov's Inequality.\\
Further, for $2^{s-1}\leq N-1<2^s$ and $y\notin Y_s$, and using that $[0..N)=\bigsqcup_{I\in L(N)}I$ with $L(N)$ of cardinality at most $2s-1$, we have
\begin{align*}
\left|\sum_{t=0}^{N-1} F(y,t)\right|^p &= \left|\sum_{I\in L(N)}\sum_{t\in I} F(y,t)\right|^p\\
&\leq (2s-1)^{p-1}\sum_{I\in L(N)}\left|\sum_{t\in I} F(y,t)\right|^p &\text{(by Hölder's Inequality)}\\
&\leq (2s-1)^{p-1}\sum_{I\in L_s}\left|\sum_{t\in I} F(y,t)\right|^p \\
&\ll_p s^{1+p+p\varepsilon}2^{s} &\text{(since }y\notin Y_{s})
\end{align*}
which yields the claim by raising to the power $\frac{1}{p}$.
\end{proof}
\begin{proof}[Proof of Proposition \ref{pointwise equidistribution} (see \cite{kleinbock2017pointwise})]
Let $\varepsilon>0$ and choose a sequence of measurable subsets $\{Y_s\}_{s\in \mathbb{N}}$ as in Lemma \ref{lemma 2} . Observe that
$$\sum_{s=1}^\infty \nu(Y_s) \leq \sum_{s=1}^\infty C s^{-(1+p\varepsilon)} < \infty.
$$
The Borel-Cantelli lemma implies that there exists a full-measure subset $Y(\varepsilon)\subset Y$ such that for every $y \in Y(\varepsilon) $ there exists $s_y \in \mathbb{N}$ such that for all $s \geq s_y$ we have $y \notin Y_s$.\\
Let $N\geq 2$ and $s= 1 + \left\lfloor \log N-1 \right\rfloor$, so that $2^{s-1}\leq N-1 < 2^s $. Then, for $N-1 \geq 2^{s_y}$ we have $s> s_y$ and $y \notin Y_s$, thus
\begin{align*}
\left|\sum_{t=0}^{N-1}F(y,t) \right| &\ll 2^{s\frac{1}{p}}s^{1+\frac{1}{p} +\varepsilon}
\\
&\leq
(2N)^{\frac{1}{p}}\log^{1+\frac{1}{p}+ \varepsilon}(2N).
\end{align*}
This implies the claim for $y \in \cap_{N\in \mathbb{N}}Y(1/N)$.
\end{proof}

We now apply Proposition \ref{pointwise equidistribution} to the counting function $\sum_{t}\hat{\chi}\circ a_t$. We denote by vol$(F_{1,c})$ the average of the Siegel transform from Proposition \ref{siegel mean value thrm} for the function $\chi=\chi_{F_{1,c}}$,
$$\text{vol}(F_{1,c}) \coloneqq \int_{\mathcal{C}} \chi(x)dx = \int_{\mathcal{X}} \hat{\chi}(\Lambda)d\mu_\mathcal{X}(\Lambda).$$
\begin{thm}
\label{pointwise equidistribution counting function}
There exists $\delta<1$ depending only on the dimension $n$ such that for almost every $k \in K$ we have
$$
 \sum_{t=0}^{N-1} \hat{\chi}(a_t k \Lambda_0) = N\,\emph{vol}(F_{1,c}) + O_{k} (N^{\delta}).
$$
\end{thm}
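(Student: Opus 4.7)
The plan is to apply Proposition \ref{pointwise equidistribution} on the probability space $(K,\mu_K)$ to
\begin{equation*}
F(k,t) := \hat{\chi}(a_t k\Lambda_0) - V, \qquad V := \text{vol}(F_{1,c}).
\end{equation*}
The whole task reduces to producing $p>1$ and $C>0$ such that, uniformly in $0\le a<b$,
\begin{equation*}
\int_K\Big|\sum_{t=a}^{b-1}F(k,t)\Big|^p d\mu_K(k)\;\le\;C(b-a).
\end{equation*}
Once this is in hand, Proposition \ref{pointwise equidistribution} yields $\sum_{t=0}^{N-1}F(k,t)=O_k(N^{1/p}\log^{1+1/p+\varepsilon'}N)$ for almost every $k$, and taking $p$ slightly above $1$ gives $\delta<1$ depending only on $n$.

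Since $\hat\chi$ is neither smooth nor compactly supported on $\mathcal{X}$, I approximate it by the smooth truncated Siegel transform $\hat f_\varepsilon^{(L)}$, where $f_\varepsilon\in C_c^\infty(\mathcal{C})$ is the family of Proposition \ref{smooth approximation} and the truncation is as in Proposition \ref{bounds truncated siegel transform}. Writing $W := \int_{\mathcal{X}}\hat f_\varepsilon^{(L)}\,d\mu_{\mathcal{X}}$, I split
\begin{equation*}
\sum_{t=a}^{b-1}F(k,t) \;=\; \sum_{t=a}^{b-1}\bigl(\hat f_\varepsilon^{(L)}(a_tk\Lambda_0)-W\bigr) \;+\;(W-V)(b-a)\;+\;\sum_{t=a}^{b-1}\bigl(\hat\chi-\hat f_\varepsilon^{(L)}\bigr)(a_tk\Lambda_0),
\end{equation*}
and take $\varepsilon=(b-a)^{-c_1}$, $L=(b-a)^{c_2}$ with $c_1,c_2>0$ to be tuned.

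For the smooth main term I expand the $L^2_K$-norm squared, apply Proposition \ref{double equidistribution K orbits} with $\varphi=\psi=\hat f_\varepsilon^{(L)}-W$, and sum the geometric decay $\sum_{t_1,t_2=a}^{b-1}e^{-\gamma|t_1-t_2|}\ll(b-a)$; together with the bound $\|\hat f_\varepsilon^{(L)}\|_{C^l}\ll L\|f_\varepsilon\|_{C^l}\ll L\varepsilon^{-l}$ from (\ref{bound for Cl norm truncated}) and (\ref{epsilon approximation of chi}), this gives an $L^2_K$-bound of order $(b-a)L^2\varepsilon^{-2l}$, which upgrades via Jensen to an $L^p_K$-bound of order $(b-a)^{p(1+2c_2+2c_1l)/2}$. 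Proposition \ref{siegel mean value thrm} combined with (\ref{epsilon approximation of chi}) and the $L^1_{\mathcal{X}}$-estimate in Proposition \ref{bounds truncated siegel transform} yields $|W-V|\ll \varepsilon+L^{-(\tau-1)}$ for any $\tau<n$, so the constant-shift contribution $|(W-V)(b-a)|^p$ is $\ll (b-a)^{p(1-c_1)}+(b-a)^{p(1-c_2(\tau-1))}$. For the approximation error I use the triangle inequality
\begin{equation*}
\Big\|\sum_{t=a}^{b-1}(\hat\chi-\hat f_\varepsilon^{(L)})\circ a_t\Big\|_{L^p_{\mathcal{Y}}}\;\le\;\sum_{t=a}^{b-1}\Bigl(\|\hat\chi\circ a_t-\hat f_\varepsilon\circ a_t\|_{L^p_{\mathcal{Y}}}+\|\hat f_\varepsilon\circ a_t-\hat f_\varepsilon^{(L)}\circ a_t\|_{L^p_{\mathcal{Y}}}\Bigr);
\end{equation*}
the second summand is $\ll L^{-\tau(2-p)/(2p)}$ by Proposition \ref{bounds truncated siegel transform}, and the first is $\ll \varepsilon^{\lambda}$ for some $\lambda\in(0,1)$ by interpolating the $L^1$-bound from Proposition \ref{smooth approximation} against the uniform $L^q_{\mathcal{Y}}$-bound for $q<2$ that follows from Propositions \ref{alpha_growth} and \ref{bounds siegel transform}.

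Each contribution to $\|\sum F(k,t)\|_{L^p}^p$ has the shape $(b-a)^{\alpha_i}$, and all $\alpha_i\le 1$ as soon as $p>1$ is close enough to $1$ and $c_1,c_2$ lie in a suitable positive range: the binding constraints are $2(c_2+c_1l)\le(2-p)/p$ on one side and a finite list of conditions of the form $c_j\cdot(\text{positive factor})\ge(p-1)/p$ on the other, all simultaneously solvable for $p$ sufficiently near $1$. The validity conditions $t\ge\kappa\log L$ and $t\ge-\theta^{-1}\log\varepsilon$ required by Propositions \ref{non-espace of mass} and \ref{smooth approximation} reduce to $t\gg\log(b-a)$; the $O(\log(b-a))$ initial indices where this fails are handled by the uniform $L^p_{\mathcal{Y}}$-bound $\|\hat\chi\circ a_t\|_{L^p_{\mathcal{Y}}}\ll 1$ from Propositions \ref{alpha_growth} and \ref{bounds siegel transform}, contributing only $O(\log^p(b-a))$. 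The main obstacle is exactly this balancing: the smoothness cost $\|f_\varepsilon\|_{C^l}\gg\varepsilon^{-l}$ competes with the polynomial-in-$\varepsilon$ gain from the approximation, which forces $p$ to stay close to $1$ and prevents $\delta$ from being pushed much below $1$.
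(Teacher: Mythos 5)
Your proof is correct and takes essentially the same approach as the paper: reduce to the $L^p$-bound required by Proposition \ref{pointwise equidistribution}, approximate $\hat\chi$ by the smooth truncated Siegel transform $\hat f_\varepsilon^{(L)}$, apply the effective double equidistribution of Proposition \ref{double equidistribution K orbits} to the main term via an $L^2$-expansion with the $e^{-\gamma\min(t_1,t_2,|t_1-t_2|)}$ decay, tune $\varepsilon$ and $L$ as powers of $(b-a)$, and treat the finitely many early indices (where $t\geq\kappa\log L$ and $t\geq-\theta^{-1}\log\varepsilon$ fail) by the uniform $L^p_\mathcal{Y}$-bound. The only variants relative to the paper's proof are the order of the two approximations (you smooth then truncate, whereas the paper compares $\hat\chi$ to $\hat\chi^{(L)}$ first and then to $\hat f_\varepsilon^{(L)}$) and the interpolation used to control the smoothing error ($L^1$--$L^q$ with $q<2$, via Propositions \ref{alpha_growth} and \ref{bounds siegel transform}, instead of the paper's $L^1$--$L^\infty$ interpolation on the truncated difference), neither of which affects the conclusion that some $\delta<1$ works.
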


\begin{proof}
Using Proposition \ref{pointwise equidistribution} it is enough to show that there exists $p> 1$ such that for every set $[a..b)$ in $L_s$ from lemmas \ref{dyadic estimate} and \ref{lemma 2}, we have
\begin{equation}
\label{L2-delta estimate}
\left|\left| \sum_{t=a}^{b-1} \left( \hat{\chi}-\int_{\mathcal{X}}\hat{\chi}\right)\circ a_t \right|\right|_{L^{p}(\mathcal{Y})}^{p} \ll (b-a)~.
\end{equation}
Using the estimates for the truncated Siegel transform from Proposition \ref{bounds truncated siegel transform}, we have for all $1< p<2$, $\frac{2p}{3p-2}\leq \tau < n$ and $t\geq \kappa \log L$,
\begin{align}
\left|\left|  ( \hat{\chi}\circ a_t-\int_{\mathcal{X}}\hat{\chi})- ( \hat{\chi}^{(L)}\circ a_t-\int_{\mathcal{X}}\hat{\chi}^{(L)})\right|\right|_{L^{p}_\mathcal{Y}} &\leq \left|\left|   \hat{\chi}\circ a_t- \hat{\chi}^{(L)}\circ a_t\right|\right|_{L^{p}_\mathcal{Y}}+ \int_{\mathcal{X}}\left|\hat{\chi}-\hat{\chi}^{(L)} \right| \nonumber\\
& \ll L^{-\frac{\tau(2-p)}{2p}}~+~L^{-(\tau-1)} \nonumber\\
& \ll L^{-\frac{\tau(2-p)}{2p}}.
\label{estimate truncation}
\end{align}
Further, using Proposition \ref{smooth approximation} and the estimates from Proposition \ref{bounds truncated siegel transform} and (\ref{epsilon approximation of chi}), we have for all $t\geq -\frac{1}{\theta}\log \varepsilon$,
\begin{align}
&\left\lVert  ( \hat{\chi}^{(L)}\circ a_t-\int_{\mathcal{X}}\hat{\chi}^{(L)}) - ( \hat{f}_{\varepsilon}^{(L)}\circ a_t-\int_{\mathcal{X}}\hat{f}_{\varepsilon}^{(L)})\right\rVert_{L^{p}_\mathcal{Y}}  \leq \left|\left|   \hat{\chi}^{(L)}\circ a_t- \hat{f}_{\varepsilon}^{(L)}\circ a_t\right|\right|_{L^{p}_\mathcal{Y}}+ \int_{\mathcal{X}}\left|\hat{\chi}^{(L)}-\hat{f}_{\varepsilon}^{(L)} \right| \nonumber\\
&\qquad \qquad \qquad\leq \left|\left|(\widehat{\chi -f_{\varepsilon}})^{(L)}\circ a_t \right|\right|_{\infty}^{\frac{p-1}{p}}\cdot \left|\left|(\widehat{\chi -f_{\varepsilon}})^{(L)}\circ a_t \right|\right|_{L^1_\mathcal{Y}}^{\frac{1}{p}}+ \int_{\mathcal{C}}\left|\chi-f_{\varepsilon} \right| \nonumber \\
&\qquad \qquad \qquad \ll L^{\frac{p-1}{p}}~\varepsilon^{\frac{1}{p}}~ +~ \varepsilon \nonumber \\
& \qquad \qquad \qquad\ll L^{\frac{p-1}{p}}~\varepsilon^{\frac{1}{p}}.
\label{estimate smooth}
\end{align}
Further, using the effective double equidistribution for smooth compactly supported functions from Proposition \ref{double equidistribution K orbits} and the estimates for the $C^l$-norm in (\ref{bound for Cl norm truncated}) and (\ref{epsilon approximation of chi}), we have 
\begin{align}
\left\lVert  \sum_{t=a}^{b-1}( \hat{f}_{\varepsilon}^{(L)}-\int_{\mathcal{X}}\hat{f}_{\varepsilon}^{(L)})\circ a_t \right\rVert_{L^{p}_\mathcal{Y}} &\leq \left\lVert  \sum_{t=a}^{b-1}( \hat{f}_{\varepsilon}^{(L)}-\int_{\mathcal{X}}\hat{f}_{\varepsilon}^{(L)})\circ a_t \right\rVert_{L^{2}_\mathcal{Y}} \nonumber \\
&= \left(\sum_{t_1,t_2=a}^{b-1} \int_{\mathcal{Y}}(\hat{f}_{\varepsilon}^{(L)}-\int_{\mathcal{X}}\hat{f}_{\varepsilon}^{(L)})\circ a_{t_1}\cdot (\hat{f}_{\varepsilon}^{(L)}-\int_{\mathcal{X}}\hat{f}_{\varepsilon}^{(L)})\circ a_{t_2}~d\mu_\mathcal{Y} \right)^{1/2} \nonumber \\
&\ll \left(\sum_{t_1,t_2=a}^{b-1} \left\lVert \hat{f}_{\varepsilon}^{(L)}\right\rVert_l^2 ~e^{-\gamma \min (t_1, t_2, |t_1-t_2|)} \right)^{1/2} \nonumber \\
&\ll \left\lVert \hat{f}_{\varepsilon}^{(L)}\right\rVert_l ~ (b-a)^{1/2} \quad \ll L~\varepsilon^{-l}(b-a)^{1/2},
\label{estimate equidistribution}
\end{align}
where we used the estimates
\begin{align*}
&\quad\quad\quad\sum_{\substack{t_1,t_2=a\\t_1\leq\min(t_2,|t_1-t_2|)}}^{b-1}e^{-\gamma \min (t_1, t_2, |t_1-t_2|)}\leq \sum_{t_1,t_2=a}^{b-1}e^{-\gamma t_1} \leq (b-a)\sum_{t_1=a}^{b-1}e^{-\gamma t_1} \ll b-a,\\
&\text{similarly }\quad \sum_{\substack{t_1,t_2=a\\t_2\leq\min(t_1,|t_1-t_2|)}}^{b-1}e^{-\gamma \min (t_1, t_2, |t_1-t_2|)}\ll b-a,\\
&\text{and } \sum_{\substack{t_1,t_2=a\\|t_1-t_2|\leq\min(t_1,t_2)}}^{b-1}e^{-\gamma \min (t_1, t_2, |t_1-t_2|)}\leq\sum_{\substack{l=a-b}}^{b-a}\sum_{\substack{t_1=a\\~~~ t_1\in [a+l,b-1+l]}}^{b-1}e^{-\gamma |l|}\leq \sum_{l=a-b}^{b-a}(b-a-|l|)e^{-\gamma |l|}\ll b-a.
\end{align*}
Combining (\ref{estimate truncation}), (\ref{estimate smooth}) and (\ref{estimate equidistribution}) we obtain
\begin{align}
\label{altogether bound}
\left|\left| \sum_{t=a}^{b-1} \left( \hat{\chi}-\int_{\mathcal{X}}\hat{\chi}\right)\circ a_t \right|\right|_{L^{p}_\mathcal{Y}} &\ll  (b-a)L^{-\frac{\tau(2-p)}{2p}}~+~(b-a)L^{\frac{p-1}{p}}\varepsilon^{\frac{1}{p}}~+~(b-a)^{1/2}L\varepsilon^{-l}
\end{align}
Setting the summands in (\ref{altogether bound}) to be equal, we have an optimal bound for
\begin{equation}
\label{choice L and epsilon}
\varepsilon = L^{1-p-\frac{\tau(2-p)}{2}} \quad \text{and}\quad L = (b-a)^{\frac{p}{2+(1+pl)(\tau(2-p)+2(p-1))}}.
\end{equation}
We verify that for all but finitely many sets $[a..b)$ in $L_s$, we have that for all $t \in [a..b)$ the conditions $t\geq \kappa \log  L$ and $t\geq -\frac{1}{\theta}\log \varepsilon$ are satisfied. We write $L =(b-a)^\beta$ with $\beta >0$. For sets $[a..b)$ of the first type, i.e. $[a..b)=[2^i,2^{i+1})$ with $0\leq i\leq s-2$, we have
\begin{align*}
\log L = \log &\left( 2^{i\beta} \right) = i \log(2^\beta) \\
%&\leq 2^i \log(2^\beta) \quad \text{for all } 0\leq i \leq s-2,\\
&\leq \frac{1}{\kappa}2^i \quad \text{for all } C_1\leq i \leq s-2,\text{ for some fixed integer }C_1 \text{ and large enough }s,\\
&\leq \frac{1}{\kappa} t \quad \text{ for all } t \in [2^i..2^{i+1}) \text{ for all }  C_1\leq i\leq s-2.
\end{align*}
For sets $[a..b)$ of the second type, i.e. $[a..b)=[2^ij,2^{i}(j+1))$ with $2^ij\geq 2^{s-1}$ and $2^i(j+1)\leq 2^{s}$, the condition is then a fortiori satisfied, since those sets have all a larger lower bound and are all at most as large as the set of first type $[2^{s-2}, 2^{s-1})$.\\
One verifies similarly that there exists a constant $C_2>0$ such that 
\begin{align*}
\quad \log \varepsilon &\geq -\theta t \quad \text{ for } t \text{ in all sets }[a..b)\in L_s \text{ but those }[2^i..2^{i+1}) \text{ with } 0\leq i< C_2.
\end{align*}
Further, since $\sup_{t\geq 0}||\hat{\chi}\circ a_t ||_{L^p_\mathcal{Y}}<\infty$, we have for all sets $[a..b)=[2^i..2^{i+1})$ with $i <\max(C_1,C_2)$
$$
\left|\left| \sum_{t=a}^{b-1} \left( \hat{\chi}-\int_{\mathcal{X}}\hat{\chi}\right)\circ a_t \right|\right|_{L^{p}_\mathcal{Y}} \ll (b-a) \left|\left|  \left( \hat{\chi}-\int_{\mathcal{X}}\hat{\chi}\right)\circ a_t \right|\right|_{L^{p}_\mathcal{Y}} = O(1).
$$
Hence, (\ref{altogether bound}) and (\ref{choice L and epsilon}) yield, for all sets $[a..b)$ in $L_s$,
\begin{align*}
\left|\left| \sum_{t=a}^{b-1} \left( \hat{\chi}-\int_{\mathcal{X}}\hat{\chi}\right)\circ a_t \right|\right|_{L^{p}_\mathcal{Y}} &\ll (b-a)^{\delta_p}.
\end{align*}
with the exponent $\delta_p$ given by 
$$
\delta_p = 1-\frac{\tau(2-p)}{4+ 2(pl+1)(\tau(2-p)+2(p-1))}~.
$$
We observe that $\delta_p$ is an increasing function of $p$ for $1< p<2$, with $\lim_{p\rightarrow 1}\delta_p = 1-\frac{\tau}{4+2\tau(l+1)}<1$ and $\lim_{p\rightarrow 2}\delta_p=1$
hence, by the intermediate value theorem, there exists $1<p<2$ with $ \delta_p=\frac{\,1\,}{p}<1$. We denote by $\delta$ this exponent and note that it depends only on the dimension $n$.
\end{proof}

\section{Effective estimate for the counting function}	

\begin{proof}[Proof of Theorem \ref{main theorem}]
\label{proof of main theorem}
Using Theorem \ref{pointwise equidistribution counting function} with the estimate (\ref{resandwiching N_T,c})  we have
\begin{equation*}
\left(\left\lfloor T-r_0 \right\rfloor -1\right)\text{vol}(F_{1,c_\mathcal{l}})+O\left(T ^{\delta} \right)+O\left(\mathcal{l}^n \right)\leq  N_{T,c}(\alpha) +O(1) \leq \left(\left\lfloor T+r_0 \right\rfloor\right)\text{vol}(F_{1,c})+O\left(T ^{\delta}\right).
\end{equation*}
Using further (\ref{volum F_1,c approximation}) with $\mathcal{l}=\left\lfloor T^{\frac{1}{n+1}} \right\rfloor $, we have
\begin{align*}
T\text{vol}(F_{1,c_\mathcal{l}})+O\left(T ^\delta\right) + O\left(\mathcal{l}^n \right)
&= T \left( \text{vol}(F_{1,c}) + O(\mathcal{l}^{-1})\right) +O\left(T ^\delta\right)+ O\left(\mathcal{l}^n \right)\\
&= T  \text{vol}(F_{1,c}) + O\left(T^{\gamma} \right),\quad \text{with }\gamma=\max \left(\frac{n}{n+1},\delta\right),
\end{align*}
hence
\begin{equation*}
N_{T,c}(\alpha)
=T\text{vol}(F_{1,c})+O\left(T ^{\gamma}\right).
\end{equation*}
Since full-measure sets in S$^n$ correspond to full-measure sets in $K$, we conclude that this last estimate holds for almost every $\alpha \in$ S$^n$.
\end{proof}

\newpage

\printbibliography

\end{document}